\newtheorem{theorem}{Theorem}
\newtheorem{lemma}[theorem]{Lemma}
\newtheorem{corollary}[theorem]{Corollary}
\newtheorem*{conjecture*}{Conjecture}
\newtheorem*{claim*}{Claim}
\newtheorem*{theorem*}{Theorem}
\theoremstyle{remark}
\theoremstyle{definition}
\newcommand{\A}{\mathcal{A}}
\newcommand{\Z}{\mathbb{Z}}
\newcommand{\n}{\noindent}
\newcommand{\N}{\mathbb{N}}
\newcommand{\rst}[1]{\ensuremath{{\mathbin\upharpoonright}%
\raise-.5ex\hbox{$#1$}}}  
\newcommand{\Aut}{{\rm Aut}}
\newcommand{\id}{{\rm Id}} 
\title{The automorphism group of a shift of slow growth is amenable}
\author{Van Cyr}
\address{Bucknell University, Lewisburg, PA 17837 USA}
\email{van.cyr@bucknell.edu}
\author{Bryna Kra}
\address{Northwestern University, Evanston, IL 60208 USA}
\email{kra@math.northwestern.edu}
\subjclass[2010]{37B50 (primary), 43A07, 68R15}
\keywords{subshift, automorphism, block complexity, amenable}
\thanks{The second author was partially supported by NSF grant 1500670.}
\begin{document}

\begin{abstract}
Suppose $(X,\sigma)$ is a subshift, $P_X(n)$ is the word complexity function of $X$, and $\Aut(X)$ is the group of automorphisms of $X$.  We show that if $P_X(n)=o(n^2/\log^2 n)$, then $\Aut(X)$ is amenable (as a countable, discrete group).  We further show that if $P_X(n)=o(n^2)$, then $\Aut(X)$ can never contain a nonabelian free semigroup (and, in particular, can never contain a nonabelian free subgroup).  This is
in contrast to recent examples, due to Salo and Schraudner, of subshifts with quadratic complexity that do contain such a semigroup. 
\end{abstract} 

\maketitle 

\section{Amenability and the automorphism group}

For a subshift $(X, \sigma)$ over a finite alphabet, let $\Aut(X) = \Aut(X, \sigma)$ denote the group 
of all automorphisms of the system, meaning the collection of all homeomorphisms 
$\phi\colon X\to X$ such that $\phi\circ\sigma = \sigma\circ\phi$.  
The automorphism group of many subshifts with positive entropy, including the full shift and more generally any mixing shift of finite type, is a countable group that contains many structures, including isomorphic copies of any finite group, countably many copies of $\Z$, and the free group on countably many generators (see~\cite{H, BLR}). In particular, when given the discrete topology, these automorphism groups are never amenable.  
This behavior is in contrast to what happens in minimal shifts of zero entropy: if the complexity 
function $P_X(n)$, which counts the number of words in the language of the shift, satisfies
$\limsup_{n\to\infty}\frac{\log(P_X(n))}{n^\beta} = 0$ for some $\beta < 1/2$, then the automorphism group $\Aut(X)$ is amenable; 
furthermore, every finitely 
generated torsion-free subgroup of the automorphism group has subexponential growth~\cite{CK}.   
For lower complexities, one can sometimes carry out a more detailed analysis of the automorphism group, 
and this is done for polynomial growth in~\cite{CK}, and with extra assumptions on the dynamics, sometimes one can give a complete description of the automorphism group (see~\cite{CK3, CK2, DDMP}). 

We continue the systematic study of automorphism groups here, focusing on subshifts 
with zero entropy.  These automorphism groups are constrained by the subexponential growth rate of words 
in the language of the shift, and it seems plausible that for any subshift $(X, \sigma)$ of zero entropy, we have a version of 
the Tits alternative:  either $\Aut(X)$ contains a free subgroup or $\Aut(X)$ is amenable.  
It may be possible that a stronger alternative holds, namely either $\Aut(X)$ contains a free subgroup or it is 
virtually abelian.  Somewhat surprisingly, 
we can not rule out that such an alternative holds for any shift, even without an assumption on the entropy. 
For example, this dichotomy holds for any mixing subshift of finite type~\cite{BLR}, since the automorphism group contains the free group on two generators, and it holds for other classes of subshifts with positive entropy, such as Toeplitz systems, where the automorphism group is abelian~\cite{DDMP2}.

Furthermore, a stronger result is plausible, namely that for any zero entropy subshift, the automorphism group is amenable. 
Numerous results support this statement:  the automorphism group is amenable 
for any minimal subshift whose complexity is 
stretched exponential with exponent less than $1/2$, for all subshifts with linear complexity, 
and for several other classes of low complexity shifts (see~\cite{CK3, CK2, CK, DDMP}). 

To address these questions and conjectures, 
we give a detailed analysis of the algebraic properties of $\Aut(X)$ for shifts whose complexity is at most quadratic.  
In~\cite{CK3}, we showed that for a transitive shift with subquadratic growth, after quotienting the automorphism group by the subgroup generated by the shift, we are left with a periodic group.  
This left open a stronger description of this automorphism group, as well as what happens without an assumption of transitivity.  
As a first step in addressing this, we show (see Section~\ref{sec:background} for precise definitions): 
\begin{theorem}
\label{th:main}
Assume that $(X, \sigma)$ is a subshift whose complexity function satisfies $P_X(n) = o(n^2/\log^2n)$.  Then 
$\Aut(X)$ is amenable  (as a countable discrete group). 
\end{theorem}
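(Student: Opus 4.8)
The plan is to prove amenability by reducing to finitely generated subgroups and then showing that each such subgroup has subexponential growth. Since amenability of a countable discrete group is a local property---a group is amenable as soon as all of its finitely generated subgroups are---it suffices to fix a finitely generated subgroup $G = \langle \phi_1, \dots, \phi_k\rangle \le \Aut(X)$ and prove that $G$ is amenable. By the Curtis--Hedlund--Lyndon theorem each $\phi_i^{\pm 1}$ is a sliding block code; let $R$ bound the radius of all of them. Then any element of $G$ expressible as a word of length $n$ in the generators is itself a sliding block code of radius at most $nR$. Consequently, writing $B_n$ for the ball of radius $n$ in the word metric on $G$ and $a(r)$ for the number of automorphisms of $X$ of radius at most $r$, we obtain $|B_n| \le a(nR)$. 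Recalling that a finitely generated group of subexponential growth is amenable (its metric balls form a F{\o}lner sequence), it is enough to show $\log a(nR) = o(n)$.

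The heart of the argument is therefore a sharp upper bound on $a(r)$. The naive estimate counts local rules as arbitrary functions from the length-$(2r+1)$ words of $X$ into the alphabet $\A$, giving only $a(r) \le |\A|^{P_X(2r+1)}$; this is far too weak, as it would force $\log|B_n|$ to grow like $P_X(nR) \approx n^2$, i.e. superexponentially. Instead I would exploit the rigidity forced by invertibility and by commutation with $\sigma$. The key point is that the local rule of an automorphism cannot be chosen freely window by window: as the $(2r+1)$-window slides by one symbol, the overlap between consecutive images, together with injectivity of $\phi$ and the bounded radius of $\phi^{-1}$, leaves only a restricted, and typically unique, choice for the new output, so genuinely independent choices occur only on the order of once every $\sim r$ shifts. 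Carrying this bookkeeping through the $P_X(2r+1)$ admissible windows should yield a bound of the shape
\[
a(r) \le \exp\!\Big(C\,\frac{P_X(Cr)}{r}\,\log r\Big)
\]
for a constant $C = C(X)$. I expect this counting lemma to be the main obstacle: making the ``one free choice per $r$ steps'' heuristic rigorous requires a careful structural analysis of how automorphisms permute long words, of the kind developed for transitive subquadratic shifts in~\cite{CK3}, now extended to the non-transitive setting.

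Granting such a bound, the hypothesis closes the argument exactly. If $P_X(n) = o(n^2/\log^2 n)$, then
\[
\log a(nR) \le C\,\frac{P_X(CnR)}{nR}\,\log(nR)
= \Big(P_X(CnR)\,\frac{\log^2(CnR)}{(CnR)^2}\Big)\cdot O\!\Big(\frac{n}{\log n}\Big) = o(n),
\]
where the bracketed factor tends to $0$ by assumption. Thus $|B_n| \le a(nR) = \exp(o(n))$, so $G$ has subexponential growth and is amenable. Since $G$ was an arbitrary finitely generated subgroup, $\Aut(X)$ is amenable, proving the theorem. The precise power of $\log$ in the hypothesis is explained by this computation: one factor of $\log$ is consumed by the $\log r$ appearing in the counting lemma, and the second provides exactly the room needed to turn the resulting $O(n/\log n)$ into a genuine $o(n)$.
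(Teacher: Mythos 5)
Your outer skeleton is fine: amenability of a countable group is a local property, a word of length $n$ in generators of radius $R$ is an automorphism of radius at most $nR$, and subexponential growth of balls gives amenability. But all of the actual content of the theorem has been pushed into your ``counting lemma'' $a(r)\le\exp\bigl(C\,P_X(Cr)\log r/r\bigr)$, which you do not prove, and this is a genuine gap rather than a deferred routine verification. The heuristic you give for it (invertibility plus commutation with $\sigma$ leaves ``one free choice per $\sim r$ shifts'' of the window) does not address where the difficulty actually lies for a non-minimal shift: an automorphism of radius $r$ is not built by sliding a window along a single orbit, and in a shift of complexity $o(n^2/\log^2 n)$ the group can act ``independently'' on many different parts of $X$ --- for example permuting large families of periodic orbits whose points share most of their subwords, or acting by different shift powers on different pieces --- so counting automorphisms requires global structural control of the language, not a local overlap argument. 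Making this precise is exactly what the paper's technical machinery does (the chain $X=X_0\supset X_1\supset\cdots\supset X_{k_R}$ of subshifts obtained by forbidding words that extend uniquely $2R$ times, with $k_R=o(R/\log R)$, via Lemmas~\ref{lem:remove}--\ref{lem:extend}), and even with that machinery the paper proves something strictly weaker than your lemma: by pigeonhole, $\Aut_R(X)$ is covered by $P_X(R^2)^{k_R}=\exp(o(R))$ \emph{cosets} of the subgroup $\mathcal{G}_R$ of automorphisms preserving occurrences of the chain words.

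Crucially, the paper cannot bound $|\mathcal{G}_R|$ itself: it only knows $\mathcal{G}_R$ is finite (Lemma~\ref{lem:subgroup}), via a syndeticity constant $G$ produced by an ineffective compactness argument (Lemma~\ref{lem:syndetic}). That is why its F{\o}lner sets are taken to be unions of cosets of $\mathcal{G}_R$, so that the uncontrolled factor $|\mathcal{G}_R|$ cancels in the ratio $|F_k\triangle\psi F_k|/|F_k|$, and why a separate reduction (restriction to the closure $Y$ of the aperiodic points, with locally finite kernel) is needed when aperiodic points are not dense. Your proposal, by contrast, needs a subexponential bound on the number of \emph{all} radius-$r$ automorphisms, which would imply that every finitely generated subgroup of $\Aut(X)$ has subexponential growth --- a conclusion the paper does not assert even under its hypothesis, and which in the literature is known only for torsion-free finitely generated subgroups of automorphism groups of minimal shifts~\cite{CK} (the torsion-free assumption is there precisely to exclude the large finite subgroups that your count must also absorb). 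So either your lemma is false, in which case the approach collapses, or it is true and is a new theorem strictly stronger than the paper's coset count; in either case it is the crux, it is unproven, and any proof would apparently require the paper's entire chain-of-subshifts construction plus a genuinely new idea to control $\mathcal{G}_R$.
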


In particular, the automorphism group of any shift whose complexity is $o(n^{2-\varepsilon})$, for some $\varepsilon> 0$, 
is amenable.

The techniques to prove Theorem~\ref{th:main} follow a basic strategy developed in~\cite{CK}, 
but deducing the theorem without the assumption that $X$ is minimal adds significant technical difficulties.  
One of the new ideas used is the construction of a descending chain of subshifts for which each term retains some of the properties that make minimal shifts easier to study.  We believe that this technique should prove to be applicable in other settings.

Unfortunately, our methods do not easily extend to a shift whose complexity is $o(n^2)$, but in this setting we are able to prove a weaker result that holds for this larger class of shifts: 
\begin{theorem}\label{th:main2} 
Assume that $(X, \sigma)$ is a subshift whose complexity function satisfies $\liminf_{\n\to\infty}\frac{P_X(n)}{n^2} = 0$.  Then $\Aut(X)$ does not contain an isomorphic copy of the free semigroup on two generators. 
\end{theorem}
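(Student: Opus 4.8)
The plan is to derive a contradiction from the exponential growth of a free sub-semigroup by confronting it with a near-quadratic ceiling on how many automorphisms of bounded range a low-complexity shift can support. Suppose $\Aut(X)$ contains a copy of the free semigroup on two generators $\phi,\psi$. By the Curtis--Hedlund--Lyndon theorem each of $\phi,\psi$ is a sliding block code; let $r$ be a common range for both. Since ranges add under composition, every one of the $2^n$ distinct words of length $n$ in $\phi$ and $\psi$ is an automorphism of range at most $nr$, and freeness guarantees that these $2^n$ automorphisms are pairwise distinct. Thus, writing $A(R)$ for the number of distinct elements of $\Aut(X)$ of range at most $R$, we obtain the lower bound $A(nr)\ge 2^n$ for every $n$.

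The heart of the argument is an upper bound for $A(R)$ in terms of the complexity, of the shape
\begin{equation}
\log A(R)\le C\,\frac{P_X(cR)}{R}
\label{eq:count}
\end{equation}
for absolute constants $c,C>0$. The naive estimate only gives $\log A(R)\lesssim P_X(2R+1)$, coming from the fact that an automorphism of range $R$ is determined by its local rule, a function from the $P_X(2R+1)$ admissible windows of length $2R+1$ into the alphabet $\A$; this is too weak by a factor of $R$. The saving comes from invertibility: both $\phi$ and $\phi^{-1}$ are sliding block codes of bounded range, so the values that the local rule assigns to two heavily overlapping windows are far from independent. I would make this precise by organizing the windows into overlapping chains and showing that the local rule on an entire chain is pinned down by a bounded amount of data, so that the effective number of free choices is the number of windows divided by a length proportional to $R$, producing the factor $1/R$ in \eqref{eq:count}.

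Granting \eqref{eq:count}, the theorem follows quickly. Combining it with $A(nr)\ge 2^n$ and taking logarithms gives $n\le C\,P_X(cnr)/(nr)$, hence $P_X(cnr)\ge c'\,n^2 r$ for a constant $c'>0$ and all large $n$. Setting $m=cnr$ and using that $P_X$ is nondecreasing, this forces $P_X(m)\ge c''\,m^2$ for all sufficiently large $m$, so that $\liminf_{m\to\infty}P_X(m)/m^2\ge c''>0$. This contradicts the hypothesis $\liminf_{n\to\infty}P_X(n)/n^2=0$, and therefore no such free sub-semigroup can exist. It is worth noting that \eqref{eq:count} is essentially sharp: the subshifts of quadratic complexity constructed by Salo and Schraudner do contain a free sub-semigroup, so the bound $\log A(R)=O(P_X(cR)/R)$ must be allowed to be of order $R$ precisely when $P_X(R)$ is of order $R^2$.

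The main obstacle is establishing the counting bound \eqref{eq:count}, and specifically the factor-of-$R$ improvement over the naive local-rule count; this is where the absence of a transitivity hypothesis bites, since without transitivity the language of $X$ can decompose into weakly interacting pieces and the overlapping-window chains need not propagate globally. A complementary and more structural route, which clarifies what is really at stake, is to argue that $\Aut(X)/\langle\sigma\rangle$ cannot contain infinite-order elements: if some $\phi$ had infinite order modulo $\langle\sigma\rangle$, then $\langle\sigma,\phi\rangle\cong\Z^2$ would act on $X$, and encoding this action as a two-dimensional configuration whose finite-window complexity is controlled by $P_X$ would contradict subquadratic growth via a Morse--Hedlund-type inequality. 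Once one knows every automorphism has a positive power lying in $\langle\sigma\rangle$ --- the periodicity of $\Aut(X)/\langle\sigma\rangle$ established for transitive shifts in~\cite{CK3} --- freeness is immediately impossible: if $\phi^a=\sigma^s$ and $\psi^b=\sigma^t$, then $\phi^a\psi^b=\sigma^{s+t}=\psi^b\phi^a$, a nontrivial relation among distinct words of the free semigroup. Reconciling this structural picture with the weaker $\liminf$ hypothesis in the fully general, non-transitive setting is the delicate point that confines us to the semigroup statement rather than to full amenability.
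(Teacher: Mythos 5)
There is a genuine gap, and it sits exactly where you flag it: your inequality $\log A(R)\le C\,P_X(cR)/R$ is never proved, and as literally stated it is false. Take $X$ to be the two-point orbit of $\dots 010101\dots$, so that $P_X(n)\equiv 2$; then the right-hand side tends to $0$ as $R\to\infty$, while $\Aut(X)=\{\id,\sigma\}$ gives $\log A(R)=\log 2$ for every $R$. One can patch this (an additive constant, or restricting attention to shifts with aperiodic points), but even then the factor-of-$R$ saving over the naive count $\log A(R)\lesssim P_X(2R+1)$ is precisely the hard content of the theorem, and your ``overlapping chains'' heuristic is exactly what breaks down without transitivity: the chains need not propagate across the weakly interacting pieces of the language. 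So your main route reduces the theorem to an unproven inequality whose proof would have to contain all of the difficulty. (The downstream logic is fine: granted the bound, $A(nr)\ge 2^n$ does force $P_X(m)\gtrsim m^2$ along a bounded-gap sequence and hence for all large $m$, contradicting even the $\liminf$ hypothesis.)

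Your ``complementary structural route'' is in fact the skeleton of the paper's actual proof, but you defer its key step to the transitive-case periodicity result of~\cite{CK3} and explicitly leave open how to remove transitivity --- which is the whole point of the theorem. What the paper actually does is: for each automorphism $\varphi$ and each $x$, form the space-time configuration $\eta_{\varphi,x}(i,j):=(\varphi^j\sigma^i)(x)\in\A^{\Z^2}$, whose two-dimensional complexity is controlled by $P_X$ (\cite{CK3}, Lemma 2.1, which only needs the $\liminf$ hypothesis); the Quas--Zamboni theorem~\cite{QZ} then supplies a \emph{finite} set $F$ of period vectors working simultaneously for all $x$ in the closure $Z$ of the aperiodic points. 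Since an aperiodic $x$ rules out horizontal period vectors, there is a single $M$ such that $\varphi^M x$ and $\psi^M x$ are shifts of $x$ for every aperiodic $x$; hence $\varphi^M,\psi^M$ restrict on $Z$ to the abelian group $\Aut(Z)\cap[\sigma]$, so the commutator of $\varphi^M$ and $\psi^M$ is the identity on $Z$. Finally one shows $X\setminus Z$ is finite, so a further power gives $[\varphi^{kM},\psi^{kM}]=\id$ on all of $X$, contradicting freeness. This uniformization over all aperiodic points and the finiteness of $X\setminus Z$ are the missing ingredients in your sketch; without them, neither of your two routes closes.
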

In particular, such an automorphism group can not contain any nonabelian free subgroup.  
The interest in this theorem is the constrast with a recent result of Salo and Schraudner: they constructed a subshift $(X, \sigma)$ whose complexity function satisfies $P_X(n) = (n+1)^2$ and whose automorphism group contains a free semigroup on two generators.  This highlights the subtle issues that arise in addressing questions and conjectures on amenability 
of $\Aut(X)$ when $(X,\sigma)$ is not minimal, and the difficulty in passing beyond quadratic complexity.

\section{Background} 
\label{sec:background}
\subsection{Subshifts}
Let $\mathcal{A}$ be a finite alphabet and endow $\A^{\Z}$ with  the topology induced by 
the metric 
$$ 
d(x,y):=2^{-\inf\{|i|\colon x_i\neq y_i\}}. 
$$ 
For $x\in\A^{\Z}$, we denote the $i^{th}$ coordinate of $x$ by $x_i$.  For $n\in\N$, an element $w=(w_0,\dots,w_{n-1})\in\A^n$ is called a {\em word of length $n$}.  If $w$ is a word of length $n$, then the set 
$$ 
[w]_0^+:=\{x\in\A^{\Z}\colon x_i=w_i \text{ for all }0\leq i<n\} 
$$ 
is the {\em cylinder set} determined by $w$.  The collection of all cylinder sets is a basis for the topology of $\A^{\Z}$.  The {\em (left) shift} $\sigma\colon\A^{\Z}\to\A^{\Z}$ is the map $x\mapsto\sigma x$ given by $(\sigma x)_i:=x_{i+1}$ 
for all $i\in\Z$, and it is a homeomorphism of $\A^{\Z}$.  If $x\in\A^{\Z}$ and there exists $p>0$ such that $\sigma^px=x$, then $x$ is {\em periodic} of period $p$.  If no such $p$ exists, $x$ is {\em aperiodic}. 

A closed, $\sigma$-invariant subset $X\subset\A^{\Z}$ together with the shift $\sigma\colon X\to X$ 
is called a {\em subshift}.  If $X$ is a subshift, we define the {\em language $\mathcal{L}(X)$} of $X$ to be 
$$ 
\mathcal{L}(X):=\left\{w\in\bigcup_{n=1}^{\infty}\A^n\colon[w]_0^+\cap X\neq\varnothing\right\}. 
$$ 
For $n\in\N$, the set $\mathcal{L}_n(X):=\mathcal{L}(X)\cap\A^n$ denotes the set of {\em words of length $n$} in the language of $X$, and we denote the length of word $w\in\mathcal{L}(X)$ by $|w|$.  

\subsection{Complexity}
The {\em complexity function of $X$} is the function $P_X\colon\N\to\N$ defined by $P_X(n):=|\mathcal{L}_n(X)|$.  If $x\in\A^{\Z}$, then the {\em orbit closure $\mathcal O(x)$ of $x$} under the shift  
$$ 
\overline{\mathcal{O}(x)}:=\overline{\{\sigma^ix\colon i\in\Z\}} 
$$ 
is also a subshift.  We make a slight abuse of notation 
and refer to $P_{\overline{\mathcal{O}(x)}}(n)$ as the {\em complexity function of $x$}.  To avoid confusion, we use the lowercase letter $x$ to refer to an element of $\A^{\Z}$ and the uppercase letter $X$ to refer to a subshift of $\A^{\Z}$.  
The basic result relating  dynamical properties of $x$ to its complexity is the Morse-Hedlund Theorem~\cite{MH}: 
an element $x\in\A^{\Z}$ is aperiodic if and only if its complexity function is bounded below by $n+1$ for all $n$.

Suppose $w=(w_0,\dots,w_{n-1})\in\mathcal{L}_n(X)$ and $L\in\N$ is fixed.  We say that $w$ {\em extends uniquely } $L$ times to the right and left (in the language of $X$) if there is a unique $u=(u_0,\dots,u_{n+2L-1})\in\mathcal{L}_{n+2L}(X)$ such that $w_i=u_{i+L}$ for $0\leq i<n$.  
 If $w\in\mathcal{L}(X)$ and $u\in\mathcal{L}_n(X)$ for some $n\geq|w|$, we say that $w$ is a {\em subword} of $u$ if there exists $0\leq i<n-|w|$ such that $u_j=w_j$ for $i\leq j<|w|$. 
Thus if $w$ extends uniquely $L$ times in $X$, then if $x\in X$ and if $w=(x_j,\dots,x_{j+n-1})$ for some $j\in\Z$, then $u=(x_{j-L},\dots,x_{j+n+L-1})$.  Rephrasing this, whenever $w$ is a word in $x$, then $u$ is also a word in $x$ and $w$ is a subword of $u$. 

\subsection{The automorphism group}
If $(X, \sigma)$ is a subshift and $Hom(X)$ is the group of all homeomorphisms from $X$ to itself, then the {\em group of automorphisms} of $X$, denoted $\Aut(X)$, is the centralizer of $\sigma$ in $Hom(X)$.  (Strictly speaking, we should write $\Aut(X, \sigma)$ instead of $\Aut(X)$, but we assume that the subshift is endowed with the shift and omit explicit mention of $\sigma$ from most of our notation.)  
A function $\varphi\colon X\to X$ is called a {\em sliding block code} if there exists $R\in\N\cup\{0\}$ such that $(\varphi x)_0$ is a function of $(x_{-R},\dots,x_R)$ for all $x\in X$.  In this case, $R$ is called a {\em range} of $\varphi$.  The classical Curtis-Hedlund-Lyndon Theorem~\cite{H} states that every automorphism of $X$ is a sliding block code.  In particular, this means that 
for any subshift $(X, \sigma)$, the automorphism group $\Aut(X)$ is countable. 

For $R\in\N\cup\{0\}$, 
define $\Aut_R(X)$ to be the set of all $\phi\in \Aut(X)$ such that both $\phi$ and its inverse 
are given by sliding block codes of range $R$.  
Since any block code of range $R$ is also a block code of range $R+1$, we have 
$$ 
\Aut_0(X)\subset\Aut_1(X)\subset\Aut_2(X)\subset\Aut_3(X)\subset\cdots 
$$ 
and  $\Aut(X)=\bigcup_{R=0}^{\infty}\Aut_R(X)$.  If $\varphi\in\Aut_R(X)$ and if $w\in\mathcal{L}(X)$ is such that $|w|\geq2R+1$, then we define $\varphi(w)$ to be the word of length $|w|-2R$ obtained by applying the (range $R$) block code defining $\varphi$ to $w$.  Note that this definition is not intrinsic to $\varphi$ but rather to $\varphi$ together with a range $R$.  Whenever we apply an automorphism $\varphi\in\Aut_R(X)$ to a word $w$, we are implicitly choosing $R$ to be the range of $\varphi$. 

For each $w\in\mathcal{L}(X)$ and $n\in\N$, define the function $\mathcal{W}_n\colon([w]_0^+\cap X)\to\mathcal{L}_{|w|+2n}(X)$ by 
$$ 
\mathcal{W}_n(x):=(x_{-n},x_{-n+1},\dots,x_0,x_1,\dots,x_{|w|+n-2},x_{|w|+n-1}). 
$$ 
With this notation, $w$ extends uniquely $n$ times to the right and left (in the language of $X$) if $\mathcal{W}_n(x)$ is independent of $x\in[w]_0^+\cap X$. 

Suppose $u_1,\dots,u_k,v\in\mathcal{L}(X)$.  An automorphism $\phi\in\Aut(X)$ {\em preserves occurrences of $v$} if $\phi([v]_0^+\cap X)\subset[v]_0^+\cap X$.  If $D\in\N$, then $\phi$ {\em preserves occurrences of $v$ when it is $D$ units from $u_1,\dots,u_k$} if for any $x\in[v]_0^+\cap X$ such that $\mathcal{W}_D(x)$ does not contain $u_i$ as a subword for any $1\leq i\leq k$, we have $\phi(x)\in[v]_0^+\cap X$. 

To illustrate the usefulness of this notion, we note the following lemma: 
\begin{lemma}\label{lem:useful} 
Let $R\in\N\cup\{0\}$ be fixed and suppose $\varphi,\psi\in\Aut_R(X)$.  Suppose $w\in\mathcal{L}(X)$ extends uniquely $2R$ times to the right and left and let $\tilde{w}\in\mathcal{L}(X)$ be the unique word obtained by this extension.  If $\varphi(\tilde{w})=\psi(\tilde{w})$, then $\varphi^{-1}\circ\psi$ preserves occurrences of $\tilde{w}$. 
\end{lemma}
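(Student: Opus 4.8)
The plan is to unwind the definition of ``preserves occurrences'' by a direct coordinate computation. I would fix an arbitrary $x\in[\tilde w]_0^+\cap X$, set $y:=\psi(x)$ and $z:=\varphi^{-1}(y)=(\varphi^{-1}\circ\psi)(x)$, and show $z\in[\tilde w]_0^+\cap X$. Write $N:=|\tilde w|=|w|+4R$, so that $\tilde w_i=w_{i-2R}$ for $2R\le i<2R+|w|$. Since $x\in[\tilde w]_0^+$ we have $x_i=\tilde w_i$ for $0\le i<N$, and because $\varphi$ and $\psi$ map $X$ into $X$ we automatically have $z\in X$; the real content is to pin down the first $N$ coordinates of $z$.

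The first step is to track which coordinates of $y$ are forced by $\tilde w$. Applying the range-$R$ block code of $\psi$, the symbol $y_i=(\psi x)_i$ is computed from $(x_{i-R},\dots,x_{i+R})$, hence is determined by $\tilde w$ alone for $R\le i<N-R$, where it equals the corresponding symbol of $\psi(\tilde w)$. The hypothesis $\varphi(\tilde w)=\psi(\tilde w)$ then says that on this same window $y_i=(\varphi x)_i$; that is, $y$ agrees with $\varphi(x)$ for all $R\le i<N-R$.

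The second step applies $\varphi^{-1}$, also of range $R$: the symbol $z_i$ is computed from $(y_{i-R},\dots,y_{i+R})$, so whenever the window $[i-R,i+R]$ lies inside $[R,N-R)$ --- that is, for $2R\le i<N-2R$ --- the value $z_i$ depends only on coordinates of $y$ on which $y=\varphi(x)$. On that range $z_i=(\varphi^{-1}\varphi\,x)_i=x_i$. Since $N-2R=|w|+2R$, this gives $z_i=x_i=\tilde w_i=w_{i-2R}$ for $2R\le i<|w|+2R$; in other words, the word $w$ occurs in $z$ beginning at coordinate $2R$. The arithmetic here is the crux: composing two range-$R$ codes shrinks the controllable window by $2R$ on each side, cutting $\tilde w$ (length $|w|+4R$) down to exactly $w$ (length $|w|$), which is precisely why the hypothesis demands $2R$, and not merely $R$, unique extensions.

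The final step re-inflates $w$ back to $\tilde w$ using unique extendability. Because $w$ extends uniquely $2R$ times to the right and left and occurs in $z$ at coordinate $2R$, the surrounding block of $z$ is forced to be $\tilde w$, i.e. $(z_0,\dots,z_{N-1})=\tilde w$. Hence $z\in[\tilde w]_0^+\cap X$, and as $x$ was arbitrary we conclude $(\varphi^{-1}\circ\psi)([\tilde w]_0^+\cap X)\subset[\tilde w]_0^+\cap X$. I expect the only genuine subtlety to be bookkeeping the index offsets consistently across the two block codes (and the implicit one in the definition of $\varphi(\tilde w)$), so that the window really does contract by $2R$ per side; once that is arranged, the unique-extension step is immediate.
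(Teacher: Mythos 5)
Your proof is correct and follows essentially the same route as the paper's: both arguments use the hypothesis $\varphi(\tilde w)=\psi(\tilde w)$ together with the range-$R$ locality of the block codes to force $\psi(x)$ to agree with $\varphi(x)$ on a window, then apply the block code of $\varphi^{-1}$ and the identity $\varphi^{-1}\circ\varphi=\id$ to recover $w$ in the image, and finally invoke the unique $2R$-fold extension of $w$ to $\tilde w$. The only difference is cosmetic bookkeeping (you anchor $\tilde w$ at coordinate $0$, while the paper anchors $w$ at coordinate $0$ and reduces to preserving occurrences of $w$ first), and your remark that the window contracts by exactly $2R$ per side correctly identifies why $2R$, rather than $R$, extensions are required.
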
 
\begin{proof} 
Since $w$ extends uniquely $2R$ times to both sides (to $\tilde{w}$), it suffices to show that $\varphi^{-1}\circ\psi$ preserves occurrences of $w$.  Let $x\in[w]_0^+\cap X$.  By assumption, 
$\mathcal{W}_{2R}(x)=\tilde{w}$.  Since $\varphi$ and $\psi$ are block codes of range $R$, $\mathcal{W}_R(\varphi x)=\varphi(\tilde{w})=\psi(\tilde{w})=\mathcal{W}_R(\psi x)$.  Since $\varphi^{-1}$ is a block code of range $R$ and $\varphi^{-1}(\varphi  x)=x$, 
we have $\varphi^{-1}(\mathcal{W}_R(\varphi x))=w$.  Since $\psi x\in[\mathcal{W}_R(\phi x)]_0^+\cap X$, 
it follows that $(\varphi^{-1}\circ\psi)(x)\in[w]_0^+$.  
\end{proof}

Let $(X,\sigma)$ be a subshift and suppose $w\in\mathcal{L}(X)$.  Let $X(w)\subset X$ denote the (possibly empty) subshift of $X$ obtained by forbidding the word $w$: 
$$ 
X(w):=X\setminus([w]_0^+\cap X)=\left\{x\in X\colon \sigma^jx\notin[w]_0^+ \text{ for all }  j\in\Z\right\}. 
$$ 

\subsection{Amenability}
If $G$ is a group and $F\subset G$, let $|F|$ denote the cardinality of the set $F$, and for $g\in G$ the set $gF$ is defined to be the 
set $\{gf\colon f\in F\}$.  
A discrete, countable group $G$ is {\em amenable}
if there exists a sequence $(F_k)_{k\in\N}$ of finite subsets of $G$ such that 
every $g\in G$ is contained in all but finitely many $F_k$ and such that
$$
\lim_{k\to\infty}\frac{|F_k\Delta gF_k|}{|F_k|} = 0
$$
for all $g\in G$.  In this case, the sequence $(F_k)_{k\in\N}$  is called a {\em F\o lner  sequence for $G$}.  

\section{Technical Lemmas} 

We start with a bound on the complexity for the subshift obtained by forbidding the occurrences of some word: 
\begin{lemma}\label{lem:remove} 
Suppose $(X,\sigma)$ is a subshift and $w\in\mathcal{L}(X)$.  
If the cylinder set $[w]_0^+\cap X$ contains at least one aperiodic point, then for all $n\geq|w|$,  we have 
$$P_{X(w)}(n)\leq P_X(n)-(n-|w|+1).$$ 
\end{lemma}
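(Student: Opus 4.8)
The plan is to reduce the statement to a purely combinatorial count and then to manufacture the required words directly from the given aperiodic point. First I would observe that every length-$n$ word of $X$ in which $w$ occurs as a subword is automatically absent from $\mathcal{L}_n(X(w))$: a point of $X(w)$ avoids $w$ at every coordinate, so any word it realizes avoids $w$ as well. Hence $\mathcal{L}_n(X(w))$ is contained in the set of length-$n$ words of $X$ that avoid $w$, and writing $c_n$ for the number of length-$n$ words of $X$ that do contain $w$, we get $P_{X(w)}(n)\le P_X(n)-c_n$. Thus the theorem reduces to the single inequality $c_n\ge n-|w|+1$ for every $n\ge|w|$.

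To produce these words I would use the aperiodic point $y\in[w]_0^+\cap X$, so that $w$ occurs in $y$ starting at coordinate $0$. For any occurrence of $w$ in $y$, say starting at a coordinate $q$, the $n-|w|+1$ length-$n$ windows $y_{[q-k,\,q-k+n)}$ with $0\le k\le n-|w|$ all lie in $\mathcal{L}_n(X)$ and all contain $w$, so the whole game is to show that enough of them (across one or several occurrences) are pairwise distinct. The key elementary observation I would isolate is this: if two of the windows over a single occurrence at $q$ coincide, then $y$ carries a period $p$ with $1\le p\le n-|w|$ across the interval covered by those windows, and sliding the occurrence at $q$ backwards by $p$ (which stays inside that periodic interval) produces a \emph{second} occurrence of $w$ starting at $q-p$. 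Consequently, if no occurrence of $w$ starts in the $n-|w|$ coordinates immediately to the left of $q$, then the windows over $q$ are forced to be pairwise distinct, and we already have $n-|w|+1$ distinct words containing $w$.

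This gives a clean dichotomy. Either some occurrence of $w$ in $y$ is left-isolated in the above sense, and we are done; or every occurrence of $w$ has another occurrence within the preceding $n-|w|$ coordinates, in which case the occurrences of $w$ are left-syndetic with gaps at most $n-|w|$. In the latter situation every length-$n$ window sufficiently far to the left contains a full copy of $w$, so I would invoke the Morse--Hedlund theorem on the (aperiodic) left ray of $y$ to obtain at least $n+1$ distinct length-$n$ factors there, all of which contain $w$; this again yields $c_n\ge n-|w|+1$.

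The step I expect to be the main obstacle is exactly this distinctness analysis, and in particular the degenerate case in which the occurrences of $w$ are syndetic but the ray on which they are syndetic is \emph{eventually periodic} (so the Morse--Hedlund bound on that ray is unavailable even though $y$ is globally aperiodic). The subtlety is that a coincidence among the windows does not contradict the global aperiodicity of $y$ at all, since short local periods are perfectly compatible with aperiodicity; what must be shown is that every window lost to such a coincidence is compensated by a genuinely new length-$n$ factor containing $w$, produced at the coordinate where the local period is forced to break. Making this compensation count exact --- trading coincidences against period-breakpoints, and routing the eventually-periodic ray through the opposite, aperiodic ray --- is where the real work of the lemma lies.
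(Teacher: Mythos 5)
Your reduction to counting length-$n$ words of $X$ that contain $w$, and your sliding-window argument in the left-isolated case, are both correct, and the latter is essentially the paper's argument for that case (the paper proves distinctness of the $n-|w|+1$ windows by tracking where the \emph{rightmost} occurrence of $w$ sits in each one; your local-period argument is an equivalent route). The genuine gap is in your complementary case, and it is exactly the point you flag and then defer: when every occurrence of $w$ has another occurrence within the preceding $n-|w|$ coordinates, you invoke Morse--Hedlund on ``the (aperiodic) left ray of $y$,'' but there is no reason for that ray to fail to be eventually periodic, and if it is eventually periodic the bound $P(n)\geq n+1$ is simply unavailable. Concretely, let $X$ be the orbit closure of the point $y$ which is $0$ everywhere except for a single $1$ at position $-1$, and take $w=0$ and $n\geq 4$. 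Then every occurrence of $w$ has further occurrences within distance $2$ on its left (and on its right), so you are in your ``latter situation''; yet the left ray $\cdots 0010$ is eventually periodic and has only three factors of length $n$, far fewer than $n-|w|+1=n$. Worse, the right ray $000\cdots$ is also periodic, so your proposed repair of ``routing through the opposite, aperiodic ray'' cannot work either: aperiodicity of a two-sided point does not localize to either ray. So as written the proof is incomplete, and the missing part is not a bookkeeping refinement but the actual second half of the lemma.

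The fix is much simpler than the compensation scheme you sketch, and it is what the paper does. Your period observation is symmetric: if two windows over an occurrence at $q$ coincide, the induced period $p\leq n-|w|$ produces occurrences of $w$ at \emph{both} $q-p$ and $q+p$ (the same inequality $k_2+|w|\leq n$ that you used on the left also keeps the right-shifted copy inside the periodic interval). Hence the correct dichotomy is: either some occurrence of $w$ is isolated on at least one side, in which case the sliding windows on that side are pairwise distinct and you get $n-|w|+1$ words containing $w$; or every occurrence has neighboring occurrences within $n-|w|$ on both sides, in which case consecutive occurrences in $y$ are at distance at most $n-|w|$ and extend infinitely in both directions, so \emph{every} length-$n$ factor of $y$ contains $w$. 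In that last case you apply the two-sided Morse--Hedlund theorem to the aperiodic point $y$ itself, not to a ray, and get $n+1\geq n-|w|+1$ distinct factors, all missing from $\mathcal{L}_n(X(w))$. This is precisely the paper's case split (``every word of length $n$ in $x$ contains $w$'' versus ``some word of length $n$ in $x$ is $w$-free''), and it avoids one-sided sequences entirely.
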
 
\begin{proof} 
Let $x\in[w]_0^+\cap X$ be aperiodic and let $n\geq|w|$ be fixed.  There are two cases to consider: 
\newline
\noindent
{\bf Case 1}:  Assume that every word of length $n$ that occurs in $x$ contains $w$ as a subword.  By the Morse-Hedlund Theorem and aperiodicity of $x$, there are at least $n+1$ distinct words of length $n$ that occur in $x$.  Since none of these words are in the language of $X(w)$, we have $P_{X(w)}(n)\leq P_X(n)-(n+1)$, which gives the inequality in the statement.  
\newline
\noindent
{\bf Case 2}: Assume there is a word of length $n$ that occurs in $x$ that does not contain $w$ as a subword.  Without loss of generality, we can assume that $x\in[w]_0^+$, and that either $(x_1,\dots,x_n)$ or $(x_{|w|-n-1},\dots,x_{|w|-2})$ does not contain $w$ as a subword (otherwise we replace $x$ with an appropriate shift of itself).  First suppose that $(x_1,\dots,x_n)$ does not contain $w$ as a subword and for each $y\in X$, let $\mathcal{W}(y)$ denote the unique $v\in\mathcal{L}_n(X)$ such that $y\in[v]_0^+$.  Then our assumption is that $w$ is the left-most subword (of length $|w|$) in $\mathcal{W}(x)$ and $w$ is not a subword of $\mathcal{W}(\sigma x)$.  Therefore each of the words $\mathcal{W}(x)$, $\mathcal{W}(\sigma^{-1}x)$, $\mathcal{W}(\sigma^{-2}x)$, \dots, $\mathcal{W}(\sigma^{|w|-n}x)$ contains $w$ as a subword, and the rightmost occurrence of $w$ as a subword of $\mathcal{W}(\sigma^{-i}x)$ begins at the $i^{th}$ letter, for each $0\leq i<n-|w|+1$.  It follows that these words are all distinct and none of them are words in the language of $X(w)$.  Therefore $P_{X(w)}(n)\leq P_X(n)-(n-|w|+1)$ in this case.  
On the other hand, if $w$ is not a subword of $(x_{|w|-n-1},\dots,x_{|w|-2})$, the argument is similar with the roles played by left and right reversed. 
\end{proof} 

We use this to bound the maximal length of a descending chain of subshifts obtained by forbidding a word at each step: 
\begin{lemma}\label{lem:chain} 
Suppose $(X,\sigma)$ is a subshift and $L\in\N$ is fixed.  Let 
$$ 
X=:X_0\supset X_1\supset X_2\supset\cdots\supset X_k 
$$ 
be a descending chain of nonempty subshifts, where for $0\leq i<k$ we have $X_{i+1}=X_i(w_i)$ for some 
word $w_i\in\mathcal{L}(X_i)$ with $|w_i|\leq L$.  Further suppose that for each $i=0,1,\dots,k-1$ the cylinder set $[w_i]_0^+\cap X_i$ contains at least one aperiodic point.  Then  
$$ 
k<P_X(2L-1)/L.  
$$  
\end{lemma}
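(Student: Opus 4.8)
The plan is to bound the length $k$ of the descending chain by controlling how much complexity is lost at each step, using Lemma~\ref{lem:remove} at the appropriate scale. The key observation is that at each step we forbid a word $w_i$ of length $|w_i|\leq L$ from $X_i$, and Lemma~\ref{lem:remove} tells us that forbidding a word costs at least $n-|w_i|+1$ words of length $n$, provided $[w_i]_0^+\cap X_i$ contains an aperiodic point (which is exactly our hypothesis). Since the complexity functions are nested, $P_{X_{i+1}}(n)\leq P_{X_i}(n)$ for all $n$, and the total complexity $P_X(n)$ is finite, each step must consume a definite chunk of the available complexity budget. The natural fixed scale at which to count is $n=2L-1$, because this is the smallest length that dominates every relevant quantity: it satisfies $n\geq|w_i|$ for all $i$ (so Lemma~\ref{lem:remove} applies), and the loss $n-|w_i|+1=2L-|w_i|\geq L$ is bounded below by $L$ uniformly in $i$.

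First I would note that for each $i$ with $0\leq i<k$, the word $w_i$ lies in $\mathcal{L}(X_i)$ and $[w_i]_0^+\cap X_i$ contains an aperiodic point by hypothesis. Applying Lemma~\ref{lem:remove} to the subshift $X_i$ with the forbidden word $w_i$ and the length $n=2L-1$ (which satisfies $n\geq|w_i|$ since $|w_i|\leq L\leq 2L-1$), I obtain
\[
P_{X_{i+1}}(2L-1)\leq P_{X_i}(2L-1)-(2L-1-|w_i|+1)=P_{X_i}(2L-1)-(2L-|w_i|).
\]
Since $|w_i|\leq L$, the loss satisfies $2L-|w_i|\geq L$, so each step decreases the complexity $P_{X_i}(2L-1)$ by at least $L$.

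Next I would chain these inequalities together. Summing the per-step loss of at least $L$ over the $k$ steps from $X_0$ down to $X_k$, and using that each $X_i$ is nonempty (so each complexity value is a nonnegative integer, in fact at least $1$), I get
\[
P_{X_k}(2L-1)\leq P_{X_0}(2L-1)-kL=P_X(2L-1)-kL.
\]
Since $X_k$ is nonempty we have $P_{X_k}(2L-1)\geq 1>0$, and therefore $kL<P_X(2L-1)$, which rearranges to $k<P_X(2L-1)/L$, the desired bound.

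I do not expect any serious obstacle here, as the argument is a straightforward telescoping of the per-step complexity loss. The only point requiring a little care is verifying that Lemma~\ref{lem:remove} genuinely applies at each step: one must confirm that $w_i\in\mathcal{L}(X_i)$ (guaranteed by the definition of the chain), that $n=2L-1\geq|w_i|$ (which holds since $|w_i|\leq L$), and crucially that $[w_i]_0^+\cap X_i$ contains an aperiodic point (supplied directly by the hypothesis of the lemma, and without which the complexity could in principle drop by too little or the Morse--Hedlund input would fail). The choice of the evaluation length $2L-1$ is the one slightly clever ingredient, since it must simultaneously exceed every $|w_i|$ and keep the uniform loss bounded below by $L$; any larger fixed length would also work but would weaken the constant, whereas $2L-1$ gives the clean bound in the statement.
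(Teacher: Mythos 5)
Your proof is correct and takes essentially the same route as the paper: apply Lemma~\ref{lem:remove} at the fixed length $n=2L-1$ so that each step costs at least $2L-|w_i|\geq L$ words, telescope to get $P_{X_k}(2L-1)\leq P_X(2L-1)-kL$, and use nonemptiness of $X_k$ to conclude. The paper merely phrases the final step as a contradiction ($k\geq P_X(2L-1)/L$ would force $P_{X_k}(2L-1)\leq 0$), which is the same content as your direct inequality $P_{X_k}(2L-1)\geq 1$.
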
 
\begin{proof} 
By inductively applying Lemma~\ref{lem:remove} and using the fact that $|w_i|\leq L$ for all $i$, we have $P_{X_k}(2L-1)\leq P_X(2L-1)-kL$.  If $k\geq P_X(2L-1)/L$, then $P_{X_k}(2L-1)\leq0$.  However, this is impossible, as  nonempty subshift  has 
at least one word of every length.  
\end{proof} 

The proof of the following lemma is a straighforward modification of the proof of Lemma 3.1 in~\cite{CK}: 

\begin{lemma}\label{lem:extend} 
Let $d,N\in\N$ and suppose $(X,\sigma)$ is a subshift such that $P_X(n)\leq n^d$ for all $n\geq N$.  Define  $k_n$ to be 
$$
\min\{k\in\N\colon\text{no word $w\in\mathcal{L}_n(X)$ extends uniquely $k$ times to the right and left}\}. 
$$
Then there exists $C>0$ such that for all $n\geq N$, there exists $m\leq n\log n$ satisfying $k_m\geq Cn$.  Moreover, $C$ can be taken to be $log(2)/4d$. 
\end{lemma}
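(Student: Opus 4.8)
The plan is to argue by contradiction: fix $n\ge N$ and suppose that $k_m<Cn$ for every $m\le n\log n$, where $C=\log(2)/(4d)$ is the constant to be justified at the very end. The engine of the proof is the observation that a uniform upper bound on the $k_m$ forces the complexity function to grow geometrically. Setting $K:=\lceil Cn\rceil$, the hypothesis $k_m<Cn\le K$ will mean (once the monotonicity below is in hand) that no word of length $m$ extends uniquely $K$ times, i.e. every $w\in\mathcal{L}_m(X)$ admits at least two distinct words of $\mathcal{L}_{m+2K}(X)$ having $w$ in the center. I would convert this into a doubling inequality for $P_X$ and iterate it roughly $\log n/(2C)$ times to produce a length $m_\ast\le n\log n+2K$ at which $P_X(m_\ast)$ is superpolynomial, contradicting $P_X(m_\ast)\le m_\ast^d$.

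The first technical point I would record is a monotonicity fact: if $w$ extends uniquely $k+1$ times, then it extends uniquely $k$ times. This holds because any central extension of $w$ of length $m+2k$ can itself be extended one symbol on each side inside $X$, so two distinct length-$(m+2k)$ central extensions of $w$ would produce two distinct length-$(m+2(k+1))$ central extensions. Hence the set of $k$ for which no word of length $m$ extends uniquely $k$ times is upward closed, and $k_m<K$ genuinely yields that no word of length $m$ extends uniquely $K$ times, so each such $w$ has at least two central extensions $R_K(w)\ge 2$ of length $m+2K$. The second, and more essential, ingredient is a fiber-counting identity: the map sending $u\in\mathcal{L}_{m+2K}(X)$ to its central length-$m$ subword partitions $\mathcal{L}_{m+2K}(X)$ according to that subword, so $P_X(m+2K)=\sum_{w\in\mathcal{L}_m(X)}R_K(w)$. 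Under the contradiction hypothesis $R_K(w)\ge 2$ for every $w$ of length $m\le n\log n$, and therefore $P_X(m+2K)\ge 2\,P_X(m)$ for all such $m$.

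Finally I would iterate this doubling inequality from the base length $m_0=1$, writing $m_j:=m_0+2Kj$. The inequality at step $j$ requires only that the smaller length $m_{j-1}$ be at most $n\log n$, so the recursion runs for $T\approx (n\log n)/(2K)=\log n/(2C)$ steps, giving $P_X(m_T)\ge 2^T\,P_X(m_0)\ge 2^T\ge n^{\log(2)/(2C)}$ while keeping $m_T\le n\log n+2K$. With $C=\log(2)/(4d)$ one computes $\log(2)/(2C)=2d$, so the left-hand side is of size roughly $n^{2d}$, whereas $P_X(m_T)\le m_T^d\le (2n\log n)^d$ is only of size $n^d(\log n)^d$; since $2d>d$, these are incompatible for all large $n$, which is the desired contradiction. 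I expect the main obstacle to be the bookkeeping in this last step: one must control the rounding in $K=\lceil Cn\rceil$, the lower-order terms in $T$ and $m_T$, and the requirement $m_T\ge N$, and then verify that the slack obtained by using $4d$ rather than $2d$ in the denominator of $C$ is enough to absorb the factor $(\log n)^d$ together with these roundings uniformly for $n\ge N$. The monotonicity and fiber-counting steps are routine; the quantitative calibration of $C$ is where the real care is needed.
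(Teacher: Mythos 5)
Your proposal is correct and follows essentially the same route as the paper's proof: assume $k_m<Cn$ for all $m\le n\log n$, deduce the doubling inequality $P_X(m+2k)\ge 2P_X(m)$, and iterate roughly $\log n/(2C)$ times to force $P_X$ to exceed $n^{2d}$ at a length where it is bounded by $(n\log n)^d$, which is incompatible for large $n$ when $C=\log(2)/4d$. The only differences are cosmetic refinements---your fixed step size $K=\lceil Cn\rceil$ justified by the upward-closedness of non-unique extendability, and the explicit fiber-counting identity $P_X(m+2K)=\sum_w R_K(w)$---where the paper simply applies the doubling with the variable steps $2k_m$ directly.
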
 

\begin{proof} 
Suppose $P_X(n)\leq n^d$ for all $n\geq N$.  For contradiction, suppose that for all $C>0$ there exist arbitrarily large $n$ such that $k_m<Cn$ for all $m\leq n\log n$.  Then since every word of length $n$ can be extended in at least two distinct ways to a word of length $n+2k_n$ (by adding $k_n$ letters to each side), we have that 
$$ 
P_X(n+2k_n)\geq2P_X(n). 
$$ 
Therefore the assumption that $k_m<Cn$ for all $m\leq n\log n$ implies that 
$$P_X(\lfloor n\log n\rfloor)\geq2^{\lfloor n\log n/Cn\rfloor}=\Omega(n^{(\log 2)/C}).$$  
Fixing $0<C\leq(\log 2)/4d$, then for arbitrarily large $n$ we have 
$$ 
P_X(\lfloor n\log n\rfloor)\geq n^{(\log 2)/2C}\geq n^{2d}. 
$$ 
But this contradicts the fact that $P_X(\lfloor n\log n\rfloor)\leq(n\log n)^d<n^{2d}$ for all sufficiently large $n$. 
\end{proof}

Suppose $Y\subseteq X$ are two subshifts where $Y$ is obtained by forbidding a finite number of words from the language of $X$.  If $u\in\mathcal{L}(Y)$ and $u$ extends uniquely $T$ times to the right and left (as a word in the language of $Y$), it might not extend uniquely $T$ times to the right and left when thought of as a word in the language of $X$.  The following lemma resolves this issue, showing that if $u$ appears in some element of $X$ and is sufficiently far from any occurrence of the forbidden words, then $u$ behaves as though it occurs in the language of $Y$: 
\begin{lemma}\label{lem:extend2} 
Let $(X,\sigma)$ be a subshift and let $w_1,\dots,w_{k-1}\in\mathcal{L}(X)$.  Suppose 
$$ 
Y=\{x\in X\colon\sigma^i(x)\notin[w_j]_0^+\text{ for any }i\in\Z\text{ and }j\in\{1,2,\dots,k\}\} 
$$ 
and suppose $u\in\mathcal{L}(Y)$ is a word which extends uniquely (in $\mathcal{L}(Y)$) at least $T$ many times to the right and left.  Let $v\in\mathcal{L}_{|u|+2T}(Y)$ be the unique word such that $u$ is obtained by removing the rightmost and leftmost $T$ letters from $v$.  Then there exists $D\in\N$ such that for any $x\in[u]_0^+\cap X$, if $\sigma^ix\notin[w_j]_0^+$ for any $-D\leq i\leq D$ and $1\leq j<k$, then $\sigma^{-T}x\in[v]_0^+$. 
\end{lemma}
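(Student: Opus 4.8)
The plan is to argue by contradiction, exploiting the compactness of $X$ to convert the hypothesis of local avoidance on a large window into genuine membership in $Y$. Suppose no such $D$ exists. Then for every $D\in\N$ we may choose a point $x_D\in[u]_0^+\cap X$ which avoids every forbidden word throughout the window $-D\le i\le D$ (that is, $\sigma^i x_D\notin[w_j]_0^+$ for all such $i$ and all relevant $j$), yet for which $\sigma^{-T}x_D\notin[v]_0^+$. The failure of this last condition says exactly that the word of length $|u|+2T$ occupying coordinates $-T,\dots,|u|+T-1$ of $x_D$ is some word $v_D\ne v$.

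Since there are only finitely many words in $\mathcal{L}_{|u|+2T}(X)$, one value $v'\ne v$ must equal $v_D$ for infinitely many $D$; passing to this subsequence I may assume $\sigma^{-T}x_D\in[v']_0^+$ throughout. As $\A$ is finite, $X$ is compact, so I would extract a further subsequence along which $x_D$ converges to some $x_\infty\in X$. Because $[u]_0^+$ and $[v']_0^+$ are clopen, the limit satisfies $x_\infty\in[u]_0^+$ and $\sigma^{-T}x_\infty\in[v']_0^+$; in particular the central extension of $u$ inside $x_\infty$ reads $v'\ne v$.

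The crux is to show that the limit point $x_\infty$ actually lies in $Y$, i.e. that it globally avoids all the forbidden words. Fix $i\in\Z$ and an index $j$. For every $D\ge|i|$ in the subsequence we have $\sigma^i x_D\notin[w_j]_0^+$, and since the complement of the clopen set $[w_j]_0^+$ is closed and $\sigma^i x_D\to\sigma^i x_\infty$, the limit satisfies $\sigma^i x_\infty\notin[w_j]_0^+$. As $i$ and $j$ were arbitrary, $x_\infty\in Y$. This is precisely where the growing windows $-D\le i\le D$ earn their keep: local avoidance on windows that exhaust $\Z$ becomes global avoidance in the limit.

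Finally, $x_\infty\in[u]_0^+\cap Y$, so $u$ occurs in a point of $Y$. By hypothesis $u$ extends uniquely $T$ times to the right and left in the language of $Y$, which means $\mathcal{W}_T$ is constant on $[u]_0^+\cap Y$ with value $v$; hence $\sigma^{-T}x_\infty\in[v]_0^+$. This contradicts the conclusion of the second paragraph, where $\sigma^{-T}x_\infty\in[v']_0^+$ with $v'\ne v$, since a single point cannot agree with two distinct words on the same coordinates. Thus some finite $D$ must work. I expect the only real obstacle to be the bookkeeping of the two nested extractions (first fixing $v'$ by pigeonhole, then passing to a convergent subsequence) together with the verification that $x_\infty$ lands in $Y$ and not merely in $X$; the remaining steps are direct unwindings of the definitions of the cylinder sets and of unique extension.
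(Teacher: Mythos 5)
Your proof is correct and is essentially the paper's own argument: both proceed by contradiction, choosing points $x_D\in[u]_0^+\cap X$ that avoid the forbidden words on windows $[-D,D]$ but violate the conclusion, then use compactness to extract a limit point which must lie in $Y$ and hence extend $u$ uniquely to $v$, yielding a contradiction. Your extra pigeonhole step fixing a single word $v'\ne v$ is a harmless variant of the paper's shorter observation that the complement of the clopen set $[v]_0^+$ is closed, so the limit point also fails to lie in $[v]_0^+$.
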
 

\begin{proof} 
For contradiction, suppose not.  For each $D\in\N$, choose $x_D\in[u]_0^+\cap X$ such that $\sigma^{-T}x_D\notin[v]_0^+$ and $\sigma^ix_D\notin[w_j]_0^+$ for any $-D\leq i\leq D$ and $1\leq j<k$.  Let $x\in X$ be a limit point of $\{x_D\colon D\in\N\}$.  Then $x\in[u]_0^+$ and $\sigma^ix\notin[w_j]_0^+$ for any $i\in\Z$ and $1\leq j<k$.  
Thus $x\in Y$, and since $u$ extends uniquely $T$ times to the right and left (as a subword of $Y$) we have $\sigma^{-T}x\in[v]_0^+$.  This contradicts the fact that $\sigma^{-T}x_D\notin[v]_0^+$ for all $D\in\N$ and $x$ is a limit point of $\{x_D\colon D\in\N\}$. 
\end{proof} 

%The following lemma appears as part of the proof of Theorem 3.4 in~\cite{CK}: 
%\begin{lemma} 
%Let $X$ be a subshift and let $R\in\N$ be fixed.  Suppose $w\in\mathcal{L}(X)$ is a word which extends uniquely at least $R$ times to the right and left.  Let $\tilde{w}\in\mathcal{L}_{|w|+2R}(X)$ be the word obtained by extending $w$ exactly $R$ times to each side.  If $\varphi,\psi\in\Aut_R(X)$ and if $\varphi(\tilde{w})=\psi(\tilde{w})$, then $(\varphi\circ\psi^{-1})([w]_0^+\cap X)\subset[w]_0^+\cap X$. 
%\end{lemma} 

We now generalize Lemma~\ref{lem:useful} to a form which is more useful in our setting. 

\begin{lemma}\label{lem:virtualextend} 
Let $R\in\N$ be fixed and suppose $(X,\sigma)$ is a subshift.  Let 
$$ 
X=:X_0\supset X_1\supset X_2\supset\cdots\supset X_k 
$$ 
be a descending chain of nonempty subshifts, where for $0\leq i<k$ there exists $w_i\in\mathcal{L}(X_i)$ such that $w_i$ extends uniquely at least $2R$ times to the right and left (as a word in $\mathcal{L}(X_i)$), and $X_{i+1}=X_i(w_i)$.  Suppose further that there exists $w_k\in\mathcal{L}(X_k)$ that extends at least $2R$ times to the right and left but for which $X_k(w_k)=\varnothing$.  Let $\tilde{w_i}\in\mathcal{L}(X_i)$ be the unique word of length $|w_i|+4R$ obtained by extending $w_i$ by $2R$ letters on each side.  If $\varphi,\psi\in\Aut_R(X)$ are such that $\varphi(\tilde{w}_i)=\psi(\tilde{w}_i)$ for all $i=0,1,\dots,k$, then there exists $D$ such that for all $i=0,1,2,\dots,k$ the automorphism $(\varphi^{-1}\circ\psi)$ preserves all occurrences of $\tilde{w}_0$ and preserves all occurrences of $\tilde{w}_i$ that occur at least $D$ units from $\tilde{w}_0,\tilde{w}_1,\dots,\tilde{w}_{i-1}$. 
\end{lemma}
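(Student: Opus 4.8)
The plan is to induct on $i$, establishing the conclusion one chain-level at a time and producing a constant $D_i$ that works for levels $0,1,\dots,i$; the final $D$ is then $\max_i D_i$, which is legitimate because Lemma~\ref{lem:chain} bounds the length $k$ of the chain. Throughout I write $\theta:=\varphi^{-1}\circ\psi$, and I note that since the hypothesis $\varphi(\tilde{w}_i)=\psi(\tilde{w}_i)$ is symmetric in $\varphi$ and $\psi$, running the same argument for $\psi^{-1}\circ\varphi=\theta^{-1}$ gives the corresponding statement for $\theta^{-1}$; so I carry both $\theta$ and $\theta^{-1}$ through the induction. The base case $i=0$ is exactly Lemma~\ref{lem:useful}: since $w_0$ extends uniquely $2R$ times already in $\mathcal{L}(X_0)=\mathcal{L}(X)$, both $\theta$ and $\theta^{-1}$ preserve every occurrence of $\tilde{w}_0$, with no distance condition (this is why the statement singles out $\tilde{w}_0$). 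The mechanism of Lemma~\ref{lem:useful} is also the engine of the inductive step: for any $x\in[\tilde{w}_i]_0^+$ we have $\mathcal{W}_{2R}(\sigma^{2R}x)=\tilde{w}_i$, so $\varphi(\tilde{w}_i)=\psi(\tilde{w}_i)$ forces $\theta(x)$ to contain $w_i$ at the central position $2R$.

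The real difficulty is that $w_i$ is only guaranteed to extend uniquely inside $X_i$, not inside the ambient $X$. Thus, to upgrade the central occurrence of $w_i$ in $\theta(x)$ to the full word $\tilde{w}_i$ via Lemma~\ref{lem:extend2}, I must know that $\theta(x)$ locally avoids the forbidden words $w_0,\dots,w_{i-1}$ near that occurrence. Transferring such local avoidance from $x$ to $\theta(x)$ with explicit bounds is where a naive quantitative induction breaks down: the clearance radius furnished by Lemma~\ref{lem:extend2} is only known to be finite (it comes from a compactness argument) and cannot be reconciled with the distance appearing in the preservation hypothesis. I expect this reconciliation to be the main obstacle, and the plan is to sidestep it by a compactness argument rather than by bookkeeping constants.

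Concretely, I would prove the inductive step by contradiction. If no $D_i$ works, then for every $D$ there is $x_D\in[\tilde{w}_i]_0^+$ with $\mathcal{W}_D(x_D)$ containing none of $\tilde{w}_0,\dots,\tilde{w}_{i-1}$ yet $\theta(x_D)\notin[\tilde{w}_i]_0^+$. Passing to a limit point $x_*$ of $(x_D)$, the growing windows force $x_*$ to contain \emph{no} occurrence of any $\tilde{w}_j$ with $j<i$, anywhere (an occurrence at a fixed position would already sit inside $\mathcal{W}_D(x_D)$ for large $D$). Feeding this into the unique-extension property level by level --- if $x_*\in X_l$ contained $w_l$ then, $w_l$ extending uniquely $2R$ times in $X_l$, $x_*$ would contain $\tilde{w}_l$, a contradiction --- shows that $x_*$ avoids $w_0,\dots,w_{i-1}$ globally, i.e.\ $x_*\in X_i$. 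This is the decisive gain: on the limit point every local-clearance hypothesis becomes vacuous and every unique extension is exact.

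It then remains to show $\theta(x_*)\in[\tilde{w}_i]_0^+$, which contradicts $\theta(x_D)\notin[\tilde{w}_i]_0^+$ because $[\tilde{w}_i]_0^+$ is clopen and $\theta$ is continuous. First I would show $\theta(x_*)\in X_i$ by a parallel induction on $l=0,\dots,i$: assuming $\theta(x_*)\in X_l$, an occurrence of $w_l$ in $\theta(x_*)$ would extend uniquely, inside $X_l$, to $\tilde{w}_l$, and since $\theta(x_*)\in X_l$ avoids every $\tilde{w}_j$ with $j<l$, the inductive preservation statement for $\theta^{-1}$ applies with room to spare and pulls this occurrence of $\tilde{w}_l$ back to $x_*$ --- impossible, as $x_*\in X_i$. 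Hence $\theta(x_*)\in X_i$. Finally the Lemma~\ref{lem:useful} mechanism puts $w_i$ at the center of $\theta(x_*)$, and because $\theta(x_*)\in X_i$ this occurrence extends uniquely to $\tilde{w}_i$, giving $\theta(x_*)\in[\tilde{w}_i]_0^+$ and the desired contradiction; the symmetric argument handles $\theta^{-1}$, completing the step. The one point demanding care is the verification that the limit point genuinely lies in $X_i$ and that the lower-level preservation statements transfer across $\theta^{-1}$; once the problem is moved to the limit point, the constant $D_i$ is produced without ever matching the finite clearance radii of Lemma~\ref{lem:extend2} explicitly.
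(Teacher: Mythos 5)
Your proof is correct, but the engine of your inductive step is genuinely different from the paper's. Both arguments share the same skeleton: induction on the chain level, the base case for $\tilde{w}_0$ via Lemma~\ref{lem:useful}, carrying the preservation statement for both $\varphi^{-1}\circ\psi$ and $\psi^{-1}\circ\varphi$ through the induction, and the block-code computation placing $w_i$ at the center of the image point. Where you diverge is in handling the central difficulty that $w_i$ extends uniquely only in $\mathcal{L}(X_i)$: the paper stays quantitative, applying Lemma~\ref{lem:extend2} to $Y=X_k$ to obtain a clearance radius $\tilde{D}_{k+1}$, setting $D_{k+1}:=(k+1)\tilde{D}_{k+1}$, and then tracking the occurrence sets $\mathcal{P}_0,\dots,\mathcal{P}_{k-1}$ under iterates of $\varphi^{-1}\circ\psi$ to show that occurrences of the forbidden words can be created or destroyed only within cascading distances (within $iD_k$ of $\mathcal{P}_0$, within $(i-1)D_k$ of $\mathcal{P}_1$, and so on) of lower-level occurrences, so that the image point still satisfies the hypotheses of Lemma~\ref{lem:extend2} near the origin. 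You instead run the compactness argument at the level of the preservation statement itself: the limit $x_*$ of your failure witnesses avoids every lower $\tilde{w}_j$ globally and hence lies in $X_i$; your parallel induction, pulling occurrences back through $\theta^{-1}=\psi^{-1}\circ\varphi$ with vacuously satisfied distance hypotheses, shows $\theta(x_*)\in X_i$ as well; and then unique extension inside $X_i$ applies directly, so neither Lemma~\ref{lem:extend2} nor any constant bookkeeping is needed. Your route is shorter and cleaner; the paper's buys explicit structure (control of occurrence sets under all iterates $(\varphi^{-1}\circ\psi)^t$), though both proofs yield a non-constructive $D$, since Lemma~\ref{lem:extend2} itself rests on compactness. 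Two minor remarks: your appeal to Lemma~\ref{lem:chain} to bound $k$ is unnecessary, as $k$ is finite by hypothesis and a maximum of finitely many $D_i$ always exists; and your assertion that the quantitative induction ``breaks down'' is not accurate --- the paper's proof is precisely that quantitative induction, made to work by the cascade bound --- it is simply a reconciliation you did not need to perform.
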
 
\begin{proof} 
First we show that $\varphi^{-1}\circ\psi$ preserves occurrences of $\tilde{w}_0$.  Let $x\in[\tilde{w}_0]_0^+\cap X$.  
Since $\psi(\tilde{w}_0)=\varphi(\tilde{w}_0)$, it follows that $\psi(x)\in[\varphi(\tilde{w}_0)]_0^+\cap X$.  Note that $\varphi(\tilde{w}_0)$ is $2R$ letters shorter than $\tilde{w}_0$, since the block code defining $\varphi$ has range $R$.  
Since $\varphi^{-1}([\varphi(\tilde{w}_0)]_0^+\cap X)\subset[w_0]_0^+\cap X$, 
we have that $(\varphi^{-1}\circ\psi)(x)\in[w_0]_0^+\cap X$. 
   But $[w_0]_0^+\cap X=[\tilde{w}_0]_0^+\cap X$ since $w_0$ extends uniquely $2R$ times to the right and left.  Since $x\in[\tilde{w}_0]_0^+\cap X$ was arbitrary, we have $(\varphi^{-1}\circ\psi)([\tilde{w}_0]_0^+\cap X)\subset[\tilde{w}_0]_0^+\cap X$.  Therefore $\varphi^{-1}\circ\psi$ preserves occurrences of $\tilde{w}_0$. 
 Clearly the roles of $\varphi$ and $\psi$ can be interchanged, 
 and so it also follows that 
 $\psi^{-1}\circ\varphi=(\varphi^{-1}\circ\psi)^{-1}$ preserves occurrences of $\tilde{w}_0$. 

For the second statement, we proceed by induction.  Assume 
that we have shown that $\varphi^{-1}\circ\psi$ and $\psi^{-1}\circ\varphi$ preserve occurrences of $\tilde{w}_0$ and there exists $D_k$ such that for all $i<j$, 
the automorphisms $\varphi^{-1}\circ\psi$ and $\psi^{-1}\circ\varphi$ preserve occurrences of $\tilde{w}_i$ 
that occur at least $D_k$ units from $\tilde{w}_0,\tilde{w}_1,\dots,\tilde{w}_{i-1}$.  We  
show that there exists $D_{k+1}\geq D_k$ such that $\varphi^{-1}\circ\psi$ and $\psi^{-1}\circ\varphi$ also preserve occurrences of $\tilde{w}_k$ that occur 
at least $D_{k+1}$ units from $\tilde{w}_0,\dots,\tilde{w}_{k-1}$.  By Lemma~\ref{lem:extend2} applied to the subshifts $X$ and $Y:=X_k$, there exists 
$\tilde{D}_{k+1}\geq D_k$ such that for any $x\in[\tilde{w}_k]_0^+\cap X$, if $\sigma^ix\notin[\tilde{w}_j]_0^+$ for any $-\tilde{D}_{k+1}\leq i\leq\tilde{D}_{k+1}$ and $1\leq j<k$, then $\sigma^{-2R}x\in[\tilde{w}_k]_0^+$.  Define $D_{k+1}:=(k+1)\cdot\tilde{D}_{k+1}$ and let $x\in[\tilde{w}_k]_0^+\cap X$ be such that $\sigma^ix\notin[\tilde{w}_j]_0^+$ for any $-D_{k+1}\leq i\leq D_{k+1}$ and $1\leq j<k$.  Define 
$$ 
\mathcal{P}_0:=\{p\in\Z\colon\sigma^px\in[\tilde{w}_0]_0^+\}. 
$$ 
Since $\varphi^{-1}\circ\psi$ and $\psi^{-1}\circ\varphi$ both preserve occurrences of $\tilde{w}_0$, observe that $\mathcal{P}_0$ is equal to the set $\{p\in\Z\colon\sigma^p(\varphi^{-1}\circ\psi)x\in[\tilde{w}_0]_0^+\}$ (in other words, occurrences of $\tilde{w}_0$ can neither be created nor destroyed by applying $\varphi^{-1}\circ\psi$ to $x$).  Next define 
$$ 
\mathcal{P}_1:=\{p\in\Z\colon\sigma^px\in[\tilde{w}_1]_0^+\}. 
$$ 
Since $\varphi^{-1}\circ\psi$ and $\psi^{-1}\circ\varphi$ both preserve occurrences of $\tilde{w}_1$ when they 
occur at least $D_k$ units from $\tilde{w}_0$, then for any $t\in\N$ any element of 
$$ 
\mathcal{P}_1\triangle\{p\in\Z\colon\sigma^p(\varphi^{-1}\circ\psi)^t(x)\in[\tilde{w}_1]_0^+\} 
$$ 
is within distance $D_k$ of an element of $\mathcal{P}_0$.  Further defining for each $1<i<k$ the set 
$$ 
\mathcal{P}_i:=\{p\in\Z\colon\sigma^px\in[\tilde{w}_i]_0^+\}, 
$$ 
it follows by induction that for any $t\in\N$, any element of 
$$ 
\mathcal{P}_i\triangle\{p\in\Z\colon\sigma^p(\varphi^{-1}\circ\psi)^t(x)\in[\tilde{w}_i]_0^+\} 
$$ 
lies either within distance $iD_k$ of an element of $\mathcal{P}_0$, within distance $(i-1)D_k$ of an element of $\mathcal{P}_1$, \dots, or within distance $D_k$ of an element of $\mathcal{P}_{k-1}$. 
Recall that, by assumption, the set 
$$ 
\left\{p\in\Z\colon\sigma^px\in\bigcup_{i=0}^{k-1}[\tilde{w}_i]_0^+\right\} 
$$ 
does not contain any element within distance $D_{k+1}$ of the origin.  But $\tilde{D}_{k+1}<D_{k+1}-k\cdot D_k$, 
and so for any $t\in\N$, the set 
$$ 
\left\{p\in\Z\colon\sigma^p(\varphi^{-1}\circ\psi)^t(x)\in\bigcup_{i=0}^{k-1}[\tilde{w}_i]_0^+\right\}
$$ 
does not contain any element within distance $\tilde{D}_{k+1}$ of the origin.  
However $x\in[\tilde{w}_k]_0^+$ and so, as previously, we have $(\varphi^{-1}\circ\psi)x, (\psi^{-1}\circ\varphi)x\in[w_k]_0^+$.  Since this occurrence of $w_k$ (in the element $x$) is at least $\tilde{D}_{k+1}$ units from any occurrence of $\tilde{w}_0,\dots,\tilde{w}_{k-1}$, we have $(\varphi^{-1}\circ\psi)x, (\psi^{-1}\circ\varphi)x\in[\tilde{w}_k]_0^+$. 
\end{proof} 

The following lemma allows us to adapt techniques from~\cite{CK} which relied on the fact that in a minimal shift all words occur syndetically. 

\begin{lemma}
\label{lem:syndetic} 
Suppose $(X,\sigma)$ is a subshift.  Let 
$$ 
X=:X_0\supset X_1\supset X_2\supset\cdots\supset X_k 
$$ 
be a descending chain of nonempty subshifts where for each $0\leq i<k$ there exists $w_i\in\mathcal{L}(X_i)$ such that $w_i$ extends uniquely at least $T$ times to the right and left (as a word in $\mathcal{L}(X_i)$), and $X_{i+1}=X_i(w_i)$.  Suppose further that there exists $w_k\in\mathcal{L}(X_k)$ that extends at least $T$ times to the right and left but for which $X_k(w_k)=\varnothing$.  Let $\tilde{w_i}\in\mathcal{L}(X_i)$ be the unique word of length $|w_i|+2T$ obtained by extending the word $w_i$ by 
$T$ letters on each side.  Finally, for each $i=1,2,\dots,k$, let $D\in\N$ be the constant obtained from Lemma~\ref{lem:virtualextend}.  Then there exists $G\in\N$ such that for any $x\in X$, the set 
\begin{multline*} 
\mathcal{S}_x:=\{j\in\Z\colon\sigma^ix\in[\tilde{w}_0]_0^+\}\cup\{j\in\Z\colon\sigma^jx\in[\tilde{w}_i]_0^+\text{ for some }1\leq i\leq k \\ \text{ and }\sigma^sx\notin[\tilde{w}_t]_0^+\text{ for any }t<i\text{ and any }j-D\leq s\leq j+D+|\tilde{w}_i|-1\} 
\end{multline*}
is syndetic with gap at most $G$. 
\end{lemma}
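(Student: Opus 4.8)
The plan is to replace the uniform recurrence of words in a minimal shift (used in~\cite{CK}) by a compactness argument that exploits the hypothesis $X_k(w_k)=\varnothing$, and then to locate a \emph{clean} occurrence of some $\tilde w_i$ by descending through the indices. \textbf{Step 1 (every long window meets the forbidden words).} First I would show there is a constant $N_0$ such that for every $x\in X$ and every $a\in\Z$, the window $(x_a,\dots,x_{a+N_0-1})$ contains some $w_i$, $0\le i\le k$, as a subword. Suppose not; then for each $n$ there is a point of $X$ with a window of length $2n+1$ avoiding every $w_i$ with $0\le i\le k$, and after translating this window to be centered at the origin, compactness of $X$ yields a limit point $y\in X$ in which no $w_i$ occurs at any coordinate. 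But then $y$ avoids $w_0,\dots,w_{k-1}$, so $y\in X_k$, while $y$ also avoids $w_k$, so $y\in X_k(w_k)=\varnothing$, a contradiction. This is the one place that uses $X_k(w_k)=\varnothing$, and it plays the role that minimality plays in~\cite{CK}.

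\textbf{Step 2 (descent to a clean occurrence).} Fix $x\in X$ and an arbitrary position $p$, and set $W:=(x_p,\dots,x_{p+N_0-1})$. By Step 1 some $w_{i_0}$ occurs in $W$, at a position $q_0$. I would then run a descent: given an occurrence of $w_{i_m}$ at $q_m$, if no $w_t$ with $t<i_m$ occurs within distance $\Theta:=D+L+T$ of $q_m$, where $L:=\max_i|\tilde w_i|$, I stop; otherwise I pass to such a lower-index occurrence $w_{i_{m+1}}$ at $q_{m+1}$ with $i_{m+1}<i_m$ and $|q_{m+1}-q_m|\le\Theta$. Since the indices strictly decrease and stay $\ge 0$, the descent halts after at most $k$ steps, either by the stopping rule or upon reaching index $0$ (where the rule holds vacuously). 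At the terminal occurrence $w_{i_m}$ at $q_m$ no $w_t$ with $t<i_m$ lies within $\Theta\ge D$ of $q_m$; hence by unique extension (for $i_m=0$, directly in $X_0=X$; for $i_m\ge 1$, via Lemma~\ref{lem:extend2} applied to $X$ and $X_{i_m}$, whose extension constant is dominated by the $D$ of Lemma~\ref{lem:virtualextend}) we obtain $\sigma^{q_m-T}x\in[\tilde w_{i_m}]_0^+$, i.e.\ an occurrence of $\tilde w_{i_m}$ at $j:=q_m-T$.

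\textbf{Step 3 (membership in $\mathcal S_x$ and syndeticity).} It remains to check $j\in\mathcal S_x$. If $i_m=0$ this is immediate, since every occurrence of $\tilde w_0$ lies in $\mathcal S_x$. If $i_m\ge 1$, I must rule out an occurrence of some $\tilde w_t$ with $t<i_m$ at a position $s\in[j-D,\,j+D+|\tilde w_{i_m}|-1]$; but such an occurrence would force $w_t$ to occur at $s+T$, a point within distance $D+|\tilde w_{i_m}|-1\le D+L\le\Theta$ of $q_m$, contradicting the stopping rule. Thus $j\in\mathcal S_x$, and $|j-q_0|\le k\Theta+T$ while $q_0$ lies in $W$, so $\mathcal S_x$ meets every interval of length $G:=N_0+2(k\Theta+T)$. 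Since $G$ does not depend on $x$, this is exactly the claimed syndeticity with gap at most $G$.

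The main obstacle is Step 2: without minimality one cannot appeal to uniform recurrence, and a raw occurrence of $w_i$ need not extend to $\tilde w_i$ in $X$ (only in $X_i$). The descent through indices, combined with Lemma~\ref{lem:extend2}, is precisely what converts a nearby occurrence of \emph{some} forbidden word into a clean, genuinely extendable occurrence of one of the $\tilde w_i$; the delicate part is making the two distance thresholds line up, namely $D$ for the extension of $w_i$ to $\tilde w_i$ versus $D+L$ for the cleanness condition defining $\mathcal S_x$, which is why I build the slack $\Theta=D+L+T$ into the descent.
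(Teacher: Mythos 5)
Your proof is correct, but it takes a genuinely different route from the paper's. The paper argues by contradiction and compactness applied directly to the sets $\mathcal{S}_x$: if no uniform gap $G$ existed, one takes points $x_G$ whose sets $\mathcal{S}_{x_G}$ have gaps of length at least $G$ centered at the origin, passes to a limit point $y$, and notes (since membership of $j$ in $\mathcal{S}_x$ is determined by the coordinates of $x$ in a bounded window around $j$) that $\mathcal{S}_y=\varnothing$; emptiness of $\mathcal{S}_y$ then forces $y$ to lie successively in $X_1,X_2,\dots,X_k$ and finally in $X_k(w_k)=\varnothing$. This is the same index descent you perform, but carried out on the single limit point $y$, which genuinely belongs to each $X_i$, so only the plain unique-extension hypothesis is needed and Lemma~\ref{lem:extend2} never enters. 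Your argument instead uses compactness only for the much weaker Step 1 (the raw words $w_0,\dots,w_k$ occur with uniformly bounded gaps in every point of $X$) and then runs the descent inside an arbitrary $x\in X$, which is exactly why you must invoke Lemma~\ref{lem:extend2}: a raw occurrence of $w_{i}$ in $x$ need not extend to $\tilde w_i$, and your cleanness threshold $\Theta$ supplies the hypothesis of that lemma. What your route buys is an explicit gap bound $G=N_0+2(k\Theta+T)$ and a constructive way to locate a point of $\mathcal{S}_x$ near any prescribed coordinate; what the paper's route buys is brevity and freedom from threshold bookkeeping. One caveat: your assertion that the Lemma~\ref{lem:extend2} constant for each level is dominated by the $D$ of Lemma~\ref{lem:virtualextend} is true, but it follows from the construction of $D$ inside the \emph{proof} of that lemma (where $D$ is assembled from the Lemma~\ref{lem:extend2} constants), not from its statement; if you wish to avoid relying on that, simply set $\Theta:=\max\{D,D_1',\dots,D_k'\}+L+T$, where $D_i'$ is the Lemma~\ref{lem:extend2} constant for the pair $(X,X_i)$ --- your Step 3 only needs $\Theta\ge D+L$, so nothing else in the argument changes.
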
 
\begin{proof} 
If not, then for each $G\in\N$ there exists $x_G\in X$ such that $\mathcal{S}_x$ is not syndetic with gap less than $G$.  Without loss (shifting $x$ if necessary), we can assume that $\{-\lfloor G/2\rfloor,\dots,0,\dots,\lfloor G\rfloor/2\}\cap\mathcal{S}_x=\varnothing$.  Since $X$ is compact, we can pass to a subsequence of $(x_G)_{G\in\N}$ converging to some 
$y\in X$.  Then $\mathcal{S}_y=\varnothing$.  Therefore $y\in X_k(w_k)$, a contradiction of the assumption that $X_k(w_k)=\varnothing$. 
\end{proof}

We use this to describe the set of automorphisms preserving occurrences of the sequence of words:  
\begin{lemma}\label{lem:subgroup} 
Assume $(X,\sigma)$ is a subshift and $D\in\N$.  
Let $\mathcal{W}=\{\tilde{w}_0,\tilde{w}_1,\dots,\tilde{w}_k\}\subset\mathcal{L}(X)$ be a finite set of words for which there exists $G\in\N$ such that for any $x\in X$, the set 
\begin{multline*}
\mathcal{S}_x:=\{j\in\Z\colon\sigma^jx\in[\tilde{w}_0]_0^+\}\cup\{j\in\Z\colon\sigma^jx\in[\tilde{w}_i]_0^+\text{ for some }1\leq i\leq k\\ 
\text{ but }\sigma^sx\notin[\tilde{w}_t]_0^+\text{ for any }t<i\text{ and any }j-D\leq s\leq j+D+|\tilde{w}_i|-1\} 
\end{multline*}
is syndetic with gap at most $G$.  Let $R\in\N$ be such that $|\text{w}_i|>2R$ for all $\tilde{w}_i\in\mathcal{W}$ and define 
\begin{multline*}
\mathcal{H}:=\{\varphi\in\Aut_R(X)\colon\varphi\text{ preserves occurrences of }\tilde{w}_0\text{ and occurrences of  } \\ \tilde{w}_i\text{ that occur at least $D$ units from }\tilde{w}_0,\tilde{w}_1,\dots,\tilde{w}_{i-1}\text{ for all }1\leq i\leq k\}. 
\end{multline*}
Then $\langle\mathcal{H}\rangle$ is finite. 
\end{lemma}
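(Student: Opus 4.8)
The plan is to exploit the fact that, by hypothesis, the ``marked'' occurrences recorded by $\mathcal{S}_x$ are syndetic with a uniform gap bound $G$, and that every word in $\mathcal{W}$ is longer than $2R$. A word of length greater than $2R$ that is \emph{pinned} (reproduced verbatim by a range-$R$ block code) acts as a two-sided barrier to the flow of information under that code, so if the elements of $\mathcal{H}$ can be shown to pin the marked occurrences, then each such occurrence splits the line into pieces across which the automorphisms cannot communicate. Since the marked occurrences have gaps at most $G$, the value of any $\varphi\in\langle\mathcal{H}\rangle$ on the block between two consecutive marked occurrences would then depend only on the bounded window of $x$ delimited by those two occurrences. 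As there are only finitely many words of length at most $G+2\max_i|\tilde{w}_i|$, this confines every $\varphi\in\langle\mathcal{H}\rangle$ to a finite set of local rules, and the assignment of its local rule to $\varphi$ is an injective homomorphism of $\langle\mathcal{H}\rangle$ into the symmetric group of this finite set of configurations; finiteness of $\langle\mathcal{H}\rangle$ follows.

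First I would establish the pinning at the level of a single $\varphi\in\mathcal{H}$. If $j\in\mathcal{S}_x$, then either $\sigma^jx\in[\tilde{w}_0]_0^+$, which $\varphi$ preserves outright, or $\sigma^jx\in[\tilde{w}_i]_0^+$ for some $i\geq1$ and the cleanness built into $\mathcal{S}_x$ says exactly that this occurrence lies at least $D$ units from $\tilde{w}_0,\dots,\tilde{w}_{i-1}$, which is the precise condition under which $\varphi$ preserves it. Using $\varphi\sigma=\sigma\varphi$, in either case $\sigma^j(\varphi x)\in[\tilde{w}_i]_0^+$, so $\varphi x$ reproduces the same marker word at the same position $j$. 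Thus each $\varphi\in\mathcal{H}$ pins the marked occurrences of $x$.

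Next I would promote this to all of $\langle\mathcal{H}\rangle$ and assemble the coding. The markers of length greater than $2R$, once pinned, forbid information from crossing them under a single range-$R$ step; composing finitely many such steps (an arbitrary element of $\langle\mathcal{H}\rangle$, together with the intermediate images, all of which still carry the pinned markers) keeps the value on each inter-marker block a function of the single bounded window delimited by the adjacent markers, uniformly over $\langle\mathcal{H}\rangle$. Recoding $x$ as the bi-infinite sequence of these bounded windows (which determines $x$, since the markers are syndetic) turns each $\varphi\in\langle\mathcal{H}\rangle$ into a well-defined map on the finite set of windows; this map determines $\varphi$, composition of automorphisms corresponds to composition of window-maps, and the target is finite, so $\langle\mathcal{H}\rangle$ is finite.

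The main obstacle is the step hidden in ``once pinned'': to iterate and to handle inverses I must know that the marked occurrences are preserved on \emph{both} sides, that is, that applying an element of $\langle\mathcal{H}\rangle$ neither destroys a marked occurrence nor creates a spurious lower-index marker near a cleanly marked higher-index one, which could spoil the cleanness that guaranteed preservation in the first place. The one-sided containment $\varphi([\tilde{w}_i]_0^+\cap X)\subseteq[\tilde{w}_i]_0^+\cap X$ in the definition of $\mathcal{H}$ does not by itself give this. I expect to close the gap by the same bookkeeping used in Lemma~\ref{lem:virtualextend}, tracking a map and its inverse simultaneously and inducting on the index $i$, together with a counting argument exploiting the uniformly positive lower density of marked occurrences furnished by syndeticity, which should force the containment of occurrences to be an equality. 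Once two-sided preservation is in hand, the barrier property, the injectivity, and the homomorphism property are routine, and the finiteness of the symmetric group on the window set completes the argument.
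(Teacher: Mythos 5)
Your outline coincides with the paper's proof of Lemma~\ref{lem:subgroup}: pin the occurrences recorded by $\mathcal{S}_x$, use the hypothesis $|\tilde{w}_i|>2R$ to turn pinned occurrences into information barriers for range-$R$ codes, decompose every $x\in X$ (via Lemma~\ref{lem:syndetic}) into markers separated by gaps of length at most $G+\max_t|\tilde{w}_t|$, and conclude that each element of $\langle\mathcal{H}\rangle$ is determined by its action on finitely many bounded windows. The steps you actually carry out (single-generator pinning, the barrier property, the window coding and the finiteness count) are correct and are exactly the paper's steps. The obstacle you isolate is also genuine: to compose $\varphi_2\circ\varphi_1$ you need the marker at $j\in\mathcal{S}_x$ to be preserved by $\varphi_2$ \emph{inside $\varphi_1x$}, i.e.\ to still be clean there, and nothing in the definition of $\mathcal{H}$ prevents $\varphi_1$ from creating an occurrence of some $\tilde{w}_t$, $t<i$, inside the $D$-window of a marked $\tilde{w}_i$. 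You should know that the paper's own write-up does not engage with this point: it simply asserts that elements of $\mathcal{H}$ ``preserve occurrences of $w_1,w_2$'' and composes, which is the unconditional preservation property rather than the conditional one that defines $\mathcal{H}$. So your proposal is incomplete precisely where the published argument is silent; the problem is that your proposal never actually closes the gap --- it ends with a plan, and the plan as stated does not work.

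Of your two suggested repairs, the counting one fails and the bookkeeping one needs an extra idea. Syndeticity of $\mathcal{S}_x$ bounds densities, but a \emph{single} stray creation suffices to destroy cleanness of one marker, and density considerations cannot exclude a single creation event (iterating a fixed $\varphi$ only shows creations become rare along its powers, not absent), so preservation cannot be upgraded to equality this way. Nor can Lemma~\ref{lem:virtualextend} be borrowed wholesale: its mechanism requires that the inverse also preserve occurrences, which is unavailable for a generic element of $\mathcal{H}$. What does close the gap is an index hand-off. First prove finiteness of the \emph{semigroup} generated by $\mathcal{H}$: near each original marker maintain an anchor occurrence of some word of $\mathcal{W}$; if the current anchor, of index $i\geq1$, fails to be clean at the current stage, replace it by the witnessing occurrence of strictly smaller index, which lies within $D+\max_t|\tilde{w}_t|$ of it; occurrences of $\tilde{w}_0$ are preserved by every element of $\mathcal{H}$ unconditionally, so an anchor of index $0$ never moves again. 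Since each replacement strictly decreases the index and the index never increases, there are at most $k$ replacements over the entire product, hence anchors drift at most $k(D+\max_t|\tilde{w}_t|)$ from the original markers; your barrier/window argument then runs verbatim with windows enlarged by this constant, giving finiteness of the semigroup. Inverses then come for free: in a finite semigroup of invertible maps every element has finite order, so $\varphi^{-1}$ is a positive power of $\varphi$ and $\langle\mathcal{H}\rangle$ equals the finite semigroup. With these two additions your outline becomes a complete proof, and indeed one that is more careful than the paper's own.
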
 
\begin{proof} 
Suppose $w_1,w_2\in\mathcal{W}$ and $u\in\mathcal{L}(X)$ is such that $[w_1uw_2]_0^+\cap X\neq\varnothing$.  Let $\varphi\in\mathcal{H}$.  Then $\varphi$ has range $R$ and by assumption we have $|w_1|>2R$ and $|w_2|>2R$.  Therefore there exists $v\in\mathcal{L}_{|u|}(X)$ such that $\varphi([w_1uw_2]_0^+\cap X)\subset[w_1vw_2]_0^+$, 
since $\varphi$ preserves occurrences of $w_1, w_2$ and the block code defining $\varphi$ does not have access to any information to the left of $w_1$ or to the right of $w_2$ while it acts on the word $u$.  As any element of $\langle\mathcal{H}\rangle$ can be written as a product of elements of $\mathcal{H}$, 
it follows that if $\alpha\in\langle\mathcal{H}\rangle$ then this same property holds: there exists $v\in\mathcal{L}_{|u|}(X)$ such that $\alpha([w_1uw_2]_0^+\cap X)\subset[w_1vw_2]_0^+\cap X$.  

By Lemma~\ref{lem:syndetic}, any element $x\in X$ can be (non-canonically) decomposed as 
$$ 
x=\cdots w_{-2}u_{-2}w_{-1}u_{-1}w_0u_0w_1u_1w_2u_2\cdots, 
$$ 
where $w_i\in\mathcal{W}$ for all $i\in\Z$ and $u_i\in\mathcal{L}(X)$ satisfies $|u_i|\leq G+\max\{|w|\colon w\in\mathcal{W}\}$ 
for all $i\in\Z$.  Therefore if $\alpha\in\langle\mathcal{H}\rangle$, then $\alpha$ is determined entirely by its action on sets of the form $[w_1uw_2]_0^+\cap X$ where $|u|\leq G+\max\{|w|\colon w\in\mathcal{W}\}$.  As there are only finitely many such sets and the image of each of these is another set of the same form, there are only finitely many elements of $\langle\mathcal{H}\rangle$. 
\end{proof} 

Our final technical lemma quantifies a property of functions which grow subexponentially. 

\begin{lemma}\label{lem:subexp} 
Let $g\colon\N\to\N$ be such that $\log g(n)=o(n)$.  For any $k\in\N$ and all sufficiently small $\varepsilon>0$, 
there exists $M\in\N$ such that if $N\geq M$ and if $f\colon\{0,1,\dots,N\}\to\N$ is a nondecreasing function satisfying 
$f(N)\leq g(N)$, then there exists $x\in\{0,1,\dots,N-k\}$ such that 
$$ 
\frac{f(x+k)-f(x)}{f(x)}<\varepsilon. 
$$ 
\end{lemma}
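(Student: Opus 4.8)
The plan is to argue by contraposition: I would show that if the desired $x$ fails to exist, then $f$ must grow at least exponentially, which is incompatible with the subexponential bound $f(N)\le g(N)$ once $N$ is large. Fix $k\in\N$ and $\varepsilon>0$, and suppose that for some $N$ and some nondecreasing $f\colon\{0,1,\dots,N\}\to\N$ with $f(N)\le g(N)$ the conclusion fails, meaning
$$\frac{f(x+k)-f(x)}{f(x)}\ge\varepsilon\qquad\text{for every }x\in\{0,1,\dots,N-k\}.$$
Rearranging, this says exactly that $f(x+k)\ge(1+\varepsilon)f(x)$ for all such $x$.

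Next I would chain these inequalities along the arithmetic progression $0,k,2k,\dots$. Setting $j=\lfloor N/k\rfloor$ and iterating the bound gives $f(jk)\ge(1+\varepsilon)^{j}f(0)$; since $f$ takes values in $\N$ we have $f(0)\ge1$ (which also guarantees the relative increase is well defined), while monotonicity together with $jk\le N$ gives $f(N)\ge f(jk)$. Combining these yields
$$f(N)\ge(1+\varepsilon)^{\lfloor N/k\rfloor}.$$
Taking logarithms and using $\lfloor N/k\rfloor\ge N/k-1$ produces $\log f(N)\ge(N/k-1)\log(1+\varepsilon)$, a lower bound that is \emph{linear} in $N$ with positive slope $\log(1+\varepsilon)/k$.

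Finally I would invoke the hypothesis $\log g(n)=o(n)$ to extract the contradiction. Because the lower bound above is linear in $N$ while $\log g(N)$ is $o(N)$, there is a threshold $M$ beyond which $(N/k-1)\log(1+\varepsilon)>\log g(N)$. For $N\ge M$ this forces $f(N)>g(N)$, contradicting $f(N)\le g(N)$, so for all $N\ge M$ the required $x$ must exist. Crucially, $M$ depends only on $g$, $k$, and $\varepsilon$ and not on $f$, since the only property of $f$ entering the argument is $f(N)\le g(N)$; this delivers the uniformity asserted in the statement.

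I do not expect a genuine obstacle here: the argument is a clean contrapositive, and the only points requiring care are bookkeeping ones — ensuring $f(0)\ge1$ so the relative increase makes sense, passing from $\lfloor N/k\rfloor$ to the estimate $\lfloor N/k\rfloor\ge N/k-1$, and confirming that $M$ is chosen uniformly over all admissible $f$. The phrase ``all sufficiently small $\varepsilon$'' in the statement is harmless, since the argument in fact works verbatim for every $\varepsilon>0$.
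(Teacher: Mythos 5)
Your proof is correct and follows essentially the same route as the paper's: assume every $k$-step relative increment is at least $\varepsilon$, chain the inequalities to force $f(N)\geq(1+\varepsilon)^{\lfloor N/k\rfloor}$, and contradict the hypothesis $\log g(N)=o(N)$. The only difference is cosmetic: the paper bounds $\log(1+\varepsilon)$ below by a multiple of $\varepsilon$ (which is the sole reason its statement asks for ``sufficiently small $\varepsilon$''), whereas you keep $\log(1+\varepsilon)$ intact, so your argument indeed works for every $\varepsilon>0$, as you observe.
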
 
\begin{proof} 
Let $k\in\N$ and $\varepsilon>0$ be fixed.  Find $M\in\N$ such that for all $N\geq M$ we have 
\begin{equation}\label{eq:est} 
\frac{\log g(N)}{N}<\varepsilon/4k. 
\end{equation} 
Without loss of generality, 
we can assume that $M>k$.  Let $N\geq M$ and let $f\colon\{0,1,\dots,N\}\to\N$ be nondecreasing.  Suppose that for all $0\leq x\leq N-k$ we have 
$$ 
\frac{f(x+k)-f(x)}{f(x)}\geq\varepsilon. 
$$ 
Then by induction, $f(nk)\geq(1+\varepsilon)^nf(0)$ for all $0\leq n<\lfloor N/k\rfloor$.  In particular, since $f$ is nondecreasing, 
$$ 
f(N)\geq(1+\varepsilon)^nf(0) 
$$ 
where $n=\lfloor N/k\rfloor$.  Therefore 
$$ 
\log g(N)\geq\log f(N)\geq n\log(1+\varepsilon)+\log f(0)\geq\frac{N}{2k}\log(1+\varepsilon)>\frac{N\varepsilon}{3k} 
$$ 
for all sufficiently small $\varepsilon$, a contradiction of~\eqref{eq:est}. 
\end{proof} 

\section{Amenability of $\Aut(X)$} 

Our goal in this section is to prove Theorem~\ref{th:main}.  We do this first with an added assumptions: that $(X,\sigma)$ has dense aperiodic points. 

\begin{theorem}\label{thm:dense} 
Let $X$ be a subshift with dense aperiodic points and suppose $P_X(n)=o(n^2/\log^2 n)$.  Then $\Aut(X)$ is amenable (as a countable discrete group). 
\end{theorem}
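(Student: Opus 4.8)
The plan is to build an explicit F\o lner sequence for $\Aut(X)$ by exhibiting, for each range $R$, a finite subgroup that "absorbs" the coset structure of $\Aut_R(X)$, and then exploiting the subquadratic complexity to control how these pieces fit together as $R\to\infty$. The engine is the following counting idea, modeled on \cite{CK}: an automorphism $\varphi\in\Aut_R(X)$ is determined (given its range) by its action on sufficiently long uniquely-extending words, so I want to bound $|\Aut_R(X)|$-type quantities in terms of the complexity. Concretely, I would fix $R$ and, for a suitable length scale $m$, use Lemma~\ref{lem:extend} to locate a word length $m\le n\log n$ at which some word of length $m$ extends uniquely $k_m\ge Cn$ times; because $P_X(n)=o(n^2/\log^2 n)$, the hypothesis $P_X(n)\le n^d$ of Lemma~\ref{lem:extend} holds with $d$ slightly below $2$, and the $n\log n$ versus $Cn$ trade-off is exactly what makes the $\log^2 n$ in the complexity bound the right threshold.

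The central construction is the descending chain of subshifts advertised in the introduction. Starting from $X=X_0$, I repeatedly select a word $w_i\in\mathcal L(X_i)$ that extends uniquely at least $2R$ times to the right and left and set $X_{i+1}=X_i(w_i)$, continuing until the chain terminates at the empty shift (the terminal word $w_k$ of Lemma~\ref{lem:virtualextend} and Lemma~\ref{lem:syndetic}). Because each $[w_i]_0^+\cap X_i$ meets an aperiodic point (here the dense-aperiodic-points hypothesis of Theorem~\ref{thm:dense} is essential, so that Lemma~\ref{lem:remove} and hence Lemma~\ref{lem:chain} apply), Lemma~\ref{lem:chain} forces $k<P_X(2L-1)/L$, i.e. the chain has subquadratically-many terms. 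The extended words $\tilde w_0,\dots,\tilde w_k$ then form the set $\mathcal W$ feeding Lemmas~\ref{lem:syndetic} and~\ref{lem:subgroup}: Lemma~\ref{lem:syndetic} guarantees the marker set $\mathcal S_x$ is syndetic with a uniform gap $G$, so every $x\in X$ decomposes into bounded-length blocks punctuated by words from $\mathcal W$, and Lemma~\ref{lem:subgroup} shows the subgroup $\langle\mathcal H\rangle$ of automorphisms preserving these occurrences is \emph{finite}.

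With these pieces, I would assemble the F\o lner sequence as follows. Given $\varphi,\psi\in\Aut_R(X)$ agreeing on all of $\tilde w_0,\dots,\tilde w_k$, Lemma~\ref{lem:virtualextend} tells me that $\varphi^{-1}\circ\psi$ preserves the relevant occurrences, hence lies in $\mathcal H$; so the map sending $\varphi$ to the tuple $(\varphi(\tilde w_0),\dots,\varphi(\tilde w_k))$ has fibers that are cosets of a finite group. The number of possible tuples is bounded by $\prod_i P_X(\,\cdot\,)$-type factors coming from the lengths $|\tilde w_i|$, which by the chain-length bound and Lemma~\ref{lem:subexp} grows subexponentially in $R$. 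Taking $F_R=\Aut_R(X)$ (or a finite index-set cofinal exhaustion of it), I would verify the F\o lner condition $|F_R\,\Delta\,\gamma F_R|/|F_R|\to0$ by noting that left-multiplication by a fixed $\gamma\in\Aut_{R_0}(X)$ perturbs the tuple data only near the bounded neighborhood where $\gamma$ acts nontrivially, so the ratio is governed by the finite subgroup $\langle\mathcal H\rangle$ together with a subexponential overhead that Lemma~\ref{lem:subexp} shows is negligible for small $\varepsilon$.

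The main obstacle, and the reason the theorem is hard without minimality, is the bookkeeping in Lemma~\ref{lem:virtualextend}: occurrences of $\tilde w_i$ that sit too close to earlier markers $\tilde w_0,\dots,\tilde w_{i-1}$ are not controlled, so the clean "preserves occurrences" statement degrades into "preserves occurrences at distance $\ge D$," and the constant $D$ grows with $i$ (like $(k+1)\tilde D_{k+1}$). I expect the delicate step to be confirming that this growing defect region stays negligible against the syndetic gap $G$ and the block-length bound $G+\max|w|$ from Lemma~\ref{lem:subgroup}, so that the finite group $\langle\mathcal H\rangle$ genuinely captures the fiber structure and the subexponential tuple count (via Lemma~\ref{lem:subexp}, with $g=P_X$ and $\log P_X(n)=o(n)$) dominates. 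Marrying the subquadratic chain length from Lemma~\ref{lem:chain} to this defect control is precisely where the $o(n^2/\log^2 n)$ hypothesis, rather than merely $o(n^2)$, is consumed.
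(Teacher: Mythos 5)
Your overall architecture is the paper's: the descending chain built from Lemmas~\ref{lem:extend} and~\ref{lem:chain} (with the dense-aperiodic-points hypothesis feeding Lemma~\ref{lem:remove}), the finite group of occurrence-preserving automorphisms from Lemmas~\ref{lem:syndetic} and~\ref{lem:subgroup}, and the pigeonhole on tuples $\bigl(\varphi(\tilde{w}_0),\dots,\varphi(\tilde{w}_{k_R})\bigr)$ via Lemma~\ref{lem:virtualextend}. The genuine gap is in the final step, where you take $F_R=\Aut_R(X)$ as the F\o lner sequence. This cannot work. For the full exhaustion to be asymptotically invariant you would need, for each fixed $\gamma\in\Aut_{R_0}(X)$, that $|\Aut_R(X)\cap\gamma\Aut_R(X)|/|\Aut_R(X)|\to1$; the only portion of that intersection the machinery certifies is $\gamma\Aut_{R-R_0}(X)$, so you would need $|\Aut_{R-R_0}(X)|/|\Aut_R(X)|\to1$. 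But nothing in the complexity hypothesis controls the cardinalities $|\Aut_n(X)|$ themselves: the tuple count bounds only the \emph{number of cosets} of the finite group $\mathcal{G}_R=\langle\mathcal{H}\rangle$ needed to cover $\Aut_n(X)$, while $|\mathcal{G}_R|$ is finite only by compactness (the gap $G$ in Lemma~\ref{lem:syndetic} and the bound in Lemma~\ref{lem:subgroup} are non-quantitative), and the cosets may be very unevenly populated by $\Aut_n(X)$. So a ratio of cardinalities of sets of automorphisms cannot be converted into a ratio of coset counts. Moreover, even for the coset-counting function, subexponential growth yields only that \emph{some} scale $n\le R-k$ is a plateau --- that is exactly the content of Lemma~\ref{lem:subexp} --- not that the ratio tends to $1$ along the whole exhaustion; indexing the F\o lner sets by the top scale $R$ forfeits the freedom to choose that plateau. (That finite exhaustion alone proves nothing is clear from the full shift, where each $\Aut_R(X)$ is finite and exhausts a non-amenable group.)

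The repair is the paper's construction: for each $k$, fix $R$ large, let $f(n)$ (for $n\le R$) be the number of distinct tuples realized by $\Aut_n(X)\hookrightarrow\Aut_R(X)$, and apply Lemma~\ref{lem:subexp} to the function $g(R)=P_X(R^2)^{k_R}=e^{o(R)}$ --- note the input is this function, whose subexponentiality uses $k_R=o(R/\log R)$, not $g=P_X$ as you wrote --- to find a plateau $n\le R-k$ with $(f(n+k)-f(n))/f(n)<\varepsilon/2$. Then take $F_k$ to be the union of the $f(n)$ \emph{full} cosets $\varphi_i\mathcal{G}_R$ covering $\Aut_n(X)$, and $\tilde{F}_k$ the union of the $f(n+k)$ full cosets covering $\Aut_{n+k}(X)$. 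For $\psi\in\Aut_k(X)$ each $\psi\varphi_i$ lies in $\Aut_{n+k}(X)$, hence $\psi F_k\subset\tilde{F}_k$, and since $F_k\subset\tilde{F}_k$ too,
$$
\frac{|F_k\triangle\psi F_k|}{|F_k|}\leq\frac{2|\tilde{F}_k\setminus F_k|}{|F_k|}=\frac{2\bigl(f(n+k)-f(n)\bigr)\,|\mathcal{G}_R|}{f(n)\,|\mathcal{G}_R|}<\varepsilon,
$$
so the uncontrolled factor $|\mathcal{G}_R|$ cancels; the exhaustion condition holds because the plateau scales $n_k\to\infty$. Finally, the ``delicate step'' you flag at the end is not actually an issue: in Lemma~\ref{lem:syndetic} the syndetic gap $G$ is produced \emph{after} the constant $D$ of Lemma~\ref{lem:virtualextend} is fixed, so the defect regions never need to be compared against $G$; the $o(n^2/\log^2 n)$ hypothesis is spent, just as you say in your first paragraph, in making $k_R=o(R/\log R)$ so that $P_X(R^2)^{k_R}$ is subexponential.
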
 

\begin{proof} 
Fix $R\in\N$ and let $C$ be as in Lemma~\ref{lem:extend}.  Define $X_0=X$ and by Lemma~\ref{lem:extend}, 
choose a word $w_0\in\mathcal{L}(X_0)$ such that $|w_0|\leq 2R/C\log 2R/C$ and $w_0$ extends uniquely at least $2R$ 
times to the right and to the left.  
Define $\tilde{w}_0$ to be the (unique) extension of $w_0$ exactly $2R$ times to each side and set $X_1:=X_0(\tilde{w}_0)$.  
Continue this process inductively:  once we have constructed the nonempty subshift $X_i$, 
apply Lemma~\ref{lem:extend} to find a word $w_i\in\mathcal{L}(X_i)$ such that $|w_i|\leq 2R/C\log 2R/C$ and such that $w_i$ extends at least $2R$ times to the right and to the left.  
Define $\tilde{w}_i$ to be the (unique) extension (in $\mathcal{L}(X_i)$) of $w_i$ exactly $2R$ times to each side and set $X_{i+1}:=X_i(\tilde{w}_i)$.  If $X_{i+1}$ is empty, the process ends.  By Lemma~\ref{lem:chain} this process ends after at most 
$k_R$ steps, where $k_R\leq P_X(2L-1)/L$ and $L=\lfloor 2R/C\log 2R/C\rfloor$.  It follows that, as a function of $R$, $k_R=o(R/\log R)$.  To summarize, for any $R\in\N$ we have constructed a sequence of nonempty subshifts 
$$ 
X:=X_0\supset X_1\supset X_2\supset\cdots\supset X_{k_R} 
$$ 
such that for each $i=0,1,\dots,k_R-1$ there exists $w_i\in\mathcal{L}(X_i)$ 
that extends uniquely (in $\mathcal{L}(X_i)$) at least $2R$ times to each side and is such that 
$|w_i|\leq 2R/C\log 2R/C$ and $X_{i+1}=X_i(\tilde{w}_i)$ (where $\tilde{w}_i$ is the extended version of $w_i$).  Note that for all sufficiently large $R$, $|\tilde{w}_i|\leq R^2$.  For fixed $R\in\N$, let $\mathcal{W}_R=\{\tilde{w}_1,\dots,\tilde{w}_{k_R}\}$ and let $\mathcal{G}_R\subset\Aut(X)$ be the subgroup of automorphisms generated by 
$$ 
\{\varphi\in\Aut_R(X)\colon \varphi([\tilde{w}]_0^+\cap X)\subset[\tilde{w}]_0^+\cap X \text{ for all }\tilde{w}\in\mathcal{W}_R\}. 
$$ 
By Lemma~\ref{lem:subgroup}, $\mathcal{G}_R$ is finite.  Furthermore, by the Pigeonhole Principle, 
if $\mathcal{S}\subset\Aut_R(X)$ is any set satisfying 
$$
|\mathcal{S}|>P_X(R^2)^{k_R}, 
$$ 
then there exist $\varphi,\psi\in\mathcal{S}$ such that $\varphi(\tilde{w}_i)=\psi(\tilde{w}_i)$ for all $i=0,1,2,\dots,k_R$. In other words: 
if $\mathcal{S}\subset\Aut_R(X)$ is any set satisfying 
$
|\mathcal{S}|>P_X(R^2)^{k_R}, 
$ 
then there exist $\varphi,\psi\in\mathcal{S}$ such that $(\varphi^{-1}\circ\psi)\in\mathcal{G}_R$. 

Since $P_X(R^2)\leq R^4/\log^2(R^2)\leq R^4$ for all sufficiently large $R$, we have 
$$P_X(R^2)^{k_R}\leq(R^4)^{o(R/\log(R))}=e^{o(R)},$$ 
meaning that this grows subexponentially in $R$.  Define $g\colon\N\to\N$ by $g(R):=P_X(R^2)^{k_R}$.  
Then by Lemma~\ref{lem:subexp}, for any $k\in\N$ and any sufficiently small $\varepsilon>0$, there exists $M$ such for any $N\geq M$ and any nondecreasing function $f\colon\{0,1,\dots,N\}\to\N$, which satisfies $f(N)\leq g(N)$, there exists $0\leq x\leq N-k$ such that 
$$ 
\frac{f(x+k)-f(x)}{f(x)}<\varepsilon. 
$$ 

We are now ready to prove that $\Aut(X)$ is amenable.  Let $k\in\N$ be fixed.  Choose $\varepsilon<1/k$ 
 sufficiently small such that Lemma~\ref{lem:subexp} applies 
 and let $M$ be the constant obtained from this lemma.  
 Choose $R>\max\{k,M\}$ large enough such that 
$$ 
\frac{g(R+k)-g(R)}{g(R)}<\frac{\varepsilon}{4k}. 
$$ 
Let $f\colon\{0,1,2\dots,R\}\to\N$ be the function 
\begin{multline*}
f(n):=|\{(\varphi(\tilde{w}_0),\varphi(\tilde{w}_1),\dots,\tilde{w}_{k_R})\colon\varphi\in\Aut_n(X)\hookrightarrow\Aut_R(X)\\ \text{ and }\mathcal{W}_R=\{\tilde{w}_0,\dots,\tilde{w}_{k_R}\}\}|. 
\end{multline*}
 Here, for $n\leq R$, the notation $\Aut_n(X)\hookrightarrow\Aut_R(X)$ refers to the embedded image of $\Aut_n(X)$ in $\Aut_R(X)$ obtained by using the natural identification of a range $n$ block code as a range $R$ block code.  It follow that if $\mathcal{S}\subset\Aut_n(X)$ is any set containing more than $f(n)$ elements, then there exist $\varphi,\psi\in\Aut_n(X)$ such that $(\varphi^{-1}\circ\psi)\in\mathcal{G}_R$.  In other words, $\Aut_n(X)$ can be covered by $f(n)$ many cosets of $\mathcal{G}_R$. 

By Lemma~\ref{lem:subexp}, there exists $0\leq n\leq R-k$ such that 
$$ 
\frac{f(n+k)-f(n)}{f(n)}<\varepsilon/2. 
$$ 
Fix such $n$ and let $\varphi_1,\dots,\varphi_{f(n)}\in\Aut_n(X)$ be representatives of distinct cosets of $\mathcal{G}_R$ and such that $\Aut_n(X)$ is contained in 
$$ 
F_k:=\bigcup_{i=0}^{f(n)}\varphi_i\cdot\mathcal{G}_R. 
$$ 
Observe that $F_k$ is finite and contains $\Aut_n(X)$.  Now let $\varphi_{f(n)+1},\dots,\varphi_{f(n+k)}\in\Aut_{n+k}(X)$ be $f(n+k)-f(n)$ additional representatives of distinct cosets of $\mathcal{G}_R$ and such that $\Aut_{n+k}(X)$ is contained in 
$$ 
\tilde{F}_k:=\bigcup_{i=0}^{f(n+k)}\varphi_i\cdot\mathcal{G}_R. 
$$ 
Observe that if $\psi\in\Aut_k(X)$ then for any $i=0,1,\dots,f(n)$ we have $(\psi\circ\varphi_i)\in\Aut_{n+k}(X)$.  Therefore 
$$ 
\psi\cdot F_k\subset\tilde{F}_k
$$ 
and since $F_k\subset\tilde{F}_k$, we have 
$$ 
\frac{|F_k\triangle(\psi\cdot F_k)|}{|F_k|}\leq\frac{2|\tilde{F}_k\setminus F_k|}{|F_k|}=\frac{2(f(n+k)-f(n))}{f(n)}<\varepsilon\leq\frac{1}{k}. 
$$ 
Let $n_k\in\N$ be the constant $n$ constructed above.  Observe that $n_k\to\infty$ as $k\to\infty$. 

Construct the set $F_k$ for each $k\in\N$.  We claim that $(F_k)_{k\in\N}$ is a F{\o}lner sequence in $\Aut(X)$.  By construction, $F_k$ is finite for each $k$ and we have shown that 
$$ 
\frac{|F_k\triangle(\psi\cdot F_k)|}{|F_k|}<\frac{1}{k} 
$$ 
for each $k\in\N$.  Finally, since $n_k\to\infty$ as $k\to\infty$, we have that if $\psi\in\Aut(X)$ then $\{k\colon\psi\notin F_k\}$ is finite.  
Thus we have constructed a F{\o}lner sequence for $\Aut(X)$ and so it is amenable. 
\end{proof} 
 
 We use this to complete the proof of Theorem~\ref{th:main}: 
\begin{proof}[Proof of Theorem~\ref{th:main}]
Let $Y\subset X$ be the closure of the aperiodic points in $X$.  By Theorem~\ref{thm:dense}, $\Aut(Y)$ is amenable.  For any $\varphi\in\Aut(X)$, observe that $x\in X$ is aperiodic if and only if $\varphi(x)$ is aperiodic.  Therefore  for any $\varphi\in\Aut(X)$, 
we have $\varphi(Y)=Y$ and the map $h\colon\Aut(X)\to\Aut(Y)$ given by $h(\varphi):=\varphi|_Y$ is a homomorphism.  Since the image of $h$ is a closed subgroup of $\Aut(Y)$, it is amenable.  
Thus to check that $\Aut(X)$ is amenable, it suffices to check that $\ker(h)$ is amenable. 

To show this, 
it suffices to show that any finitely generated subgroup of $\ker(h)$ is amenable.  Let $\varphi_1,\dots,\varphi_m\in\ker(h)$.  We claim that the set
$$ 
\mathcal{S}:=\{x\in X\colon\varphi_i(x)\neq x \text{ for at least one element of } \langle\varphi_1,\dots,\varphi_m\rangle\} 
$$ 
is finite.   For contradiction, suppose $\mathcal{S}$ is infinite.  Choose $R\in\N$ such that $\varphi_1,\dots,\varphi_m\in\Aut_R(X)$.  By construction, if $w\in\mathcal{L}(Y)$ is a word of length $R$ and if $\id\in\Aut(Y)$ denotes the identity, 
then $\varphi_i(w)=\id(w)$ (as a block map).  Therefore if $x\in\mathcal{S}$,  there exists $j\in\Z$ such that $\sigma^jx\in[u]_0^+$ for some word $u\in\mathcal{L}(X)\setminus\mathcal{L}(Y)$ of length $R$ (otherwise $x$ is comprised entirely of words of length $R$ on which $\varphi$ acts as the identity).  For each $x\in\mathcal{S}$ choose a word $u_x\in\mathcal{L}_R(X)\setminus\mathcal{L}_R(Y)$ such that $\sigma^jx\in[u_x]_0^+$ for some $j\in\Z$.  Since $\mathcal{L}_R(X)\setminus\mathcal{L}_R(Y)$ is finite, there exists $u\in\mathcal{L}_R(X)\setminus\mathcal{L}_R(Y)$ such that $u_x=u$ for infinitely many $x\in\mathcal{S}$.  For each such $x$, let $j_x\in\Z$ be such that $\sigma^{j_x}x\in[u]_0^+$.  Every infinite collection of points has an aperiodic limit point, and so there is some aperiodic $y\in[u]_0^+$.  This contradicts the fact that $u\notin\mathcal{L}(Y)$.  Therefore $\mathcal{S}$ is finite, 
proving the claim. 

Since the set $\mathcal S$ is finite, it follows that $\langle\varphi_1,\dots,\varphi_m\rangle$ is finite (and hence amenable).
\end{proof} 

\begin{corollary} 
Let $X$ be a subshift and suppose there exists $\varepsilon>0$ such that $P_X(n)=O(n^{2-\varepsilon})$.  Then $\Aut(X)$ is amenable (as a countable discrete group). 
\end{corollary}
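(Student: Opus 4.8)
The plan is to derive the Corollary directly from Theorem~\ref{th:main} by checking that the hypothesis $P_X(n) = O(n^{2-\varepsilon})$ is strictly stronger than the hypothesis $P_X(n) = o(n^2/\log^2 n)$ required by the theorem. Since Theorem~\ref{th:main} is already established, the entire content of the Corollary is this asymptotic comparison of growth rates, so the proof should be a short one.

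First I would record the meaning of the assumption: $P_X(n) = O(n^{2-\varepsilon})$ means there is a constant $K>0$ such that $P_X(n) \leq K n^{2-\varepsilon}$ for all sufficiently large $n$. Next I would form the relevant ratio and estimate it:
\begin{equation*}
\frac{P_X(n)}{n^2/\log^2 n} \;=\; \frac{P_X(n)\,\log^2 n}{n^2} \;\leq\; \frac{K\,n^{2-\varepsilon}\log^2 n}{n^2} \;=\; \frac{K\,\log^2 n}{n^{\varepsilon}}.
\end{equation*}
The key step is then to observe that for any fixed $\varepsilon>0$ the right-hand side tends to $0$ as $n\to\infty$, since any positive power of $n$ dominates any power of $\log n$. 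Hence $P_X(n) = o(n^2/\log^2 n)$.

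Having verified the hypothesis of Theorem~\ref{th:main}, I would simply invoke that theorem to conclude that $\Aut(X)$ is amenable as a countable discrete group, completing the proof. I do not expect any genuine obstacle here: the only thing to be careful about is the standard fact that $\log^2 n / n^{\varepsilon} \to 0$, which follows from the elementary comparison of logarithmic and polynomial growth (for instance by substituting $n = e^t$, so that the expression becomes $t^2 e^{-\varepsilon t} \to 0$, or by noting $\log n = o(n^{\delta})$ for every $\delta>0$ and taking $\delta = \varepsilon/4$). This is entirely routine, so the write-up is essentially the displayed estimate followed by the citation of Theorem~\ref{th:main}.
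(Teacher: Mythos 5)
Your proof is correct and follows exactly the paper's argument: the corollary is deduced by noting that $O(n^{2-\varepsilon})$ implies $o(n^2/\log^2 n)$ and then citing Theorem~\ref{th:main}. Your write-up merely spells out the routine asymptotic comparison $K\log^2 n/n^{\varepsilon}\to 0$ that the paper leaves implicit.
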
 
\begin{proof} 
Any function which is $O(n^{2-\varepsilon})$ is also $o(n^2/\log^2 n)$, and so this 
follows immediately from Theorem~\ref{th:main}. 
\end{proof} 

\section{Shifts of subquadratic growth} 

Recall that if $G$ is a group and $g_1,g_2\in G$ then 
$$ 
\langle g_1,g_2\rangle^+:=\{g_1^ig_2^j\colon i,j\in\N\cup\{0\}\} 
$$ 
is the {\em semigroup generated by $g_1$ and $g_2$} in $G$.  This semigroup is {\em free} if whenever $i_1,i_2,j_1,j_2\in\N\cup\{0\}$ and $(i_1,j_1)\neq(i_2,j_2)$, we have $g_1^{i_1}g_2^{j_1}\neq g_1^{i_2}g_2^{j_2}$.  If this semigroup is free, it is said to have {\em rank $2$} because it is generated by two elements of $G$. 

Let $(X,\sigma)$ be a subshift.  The {\em full group} of $(X,\sigma)$, denoted $[\sigma]$, is the group of all maps $\varphi\colon X\to X$ such that there exists $k\colon X\to\Z$ such that $\varphi(x)=\sigma^{k(x)}(x)$ for all $x\in X$.  The group $[\sigma]\cap\Aut(X)$ is the group of all {\em orbit-preserving} automorphisms.  This is a normal, abelian subgroup of $\Aut(X)$. 

We recall the statement of Theorem~\ref{th:main2}:

\newtheorem*{thm2*}{Theorem~\ref{th:main2}} 
\begin{thm2*}
{\em Let $(X,\sigma)$ be a subshift such that $\liminf P_X(n)/n^2=0$.  Then $\Aut(X)$ does not contain a free semigroup of rank $2$.} 
\end{thm2*}

\noindent Our main tool to prove this theorem is the following rephrasing given in~\cite{CK3} of a result of Quas and Zamboni: 
\begin{lemma}[Quas-Zamboni~\cite{QZ}]\label{thm:qz} 
Let $n,k\in\N$.  Then there exists a finite set $F\subset\Z^2\setminus\{(0,0)\}$ (which depends on $n$ and $k$) such that for every $\eta\in\A^{\Z^2}$ satisfying $P_{\eta}(n,k)\leq nk/16$ there exists a vector $v\in F$ such that $\eta(x+v)=\eta(x)$ for all $x\in\Z^2$. 
\end{lemma}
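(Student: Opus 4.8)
The plan is to argue by contradiction: suppose $g_1,g_2\in\Aut(X)$ generate a free (nonabelian) semigroup, so that all words in $g_1,g_2$ are distinct as maps of $X$, and fix $R$ with $g_1,g_2\in\Aut_R(X)$. Since $g_1,g_2$ need not commute, there is no hope of coding their joint action as a single element of $\A^{\ZZ}$ with controlled complexity. Instead, for $g\in\{g_1,g_2\}$ I would exploit that $g$ commutes with $\sigma$ and code the commuting $\ZZ$-action generated by $\sigma$ and $g$: for $x\in X$ set $\eta^{g}_x\in\A^{\ZZ}$ with $\eta^g_x(i,j):=(\sigma^ig^jx)_0=(g^jx)_i$. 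The key point is a \emph{uniform} complexity bound $P_{\eta^g_x}(n,k)\le P_X(n+2kR)$ valid for every $x$: reading an $n\times k$ window of $\eta^g_x$ at position $(i_0,j_0)$ only requires knowing the point $z':=\sigma^{i_0}g^{j_0}x\in X$ on an interval of length $n+2(k-1)R$, because $g^t$ has range at most $(k-1)R$ for $0\le t<k$ and $g^{j_0}$ maps $X$ into $X$ (so the offset $j_0$ costs nothing). Thus each window is a function of a word in $\mathcal L_{n+2kR}(X)$.

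Next I would feed this into Quas--Zamboni. Using $\liminf_n P_X(n)/n^2=0$, choose a scale with $n\asymp k$ and $n+2kR$ equal to a length along which $P_X$ is $o(n^2)$, so that $P_{\eta^g_x}(n,k)\le P_X(n+2kR)\le nk/16$ for all $x$ simultaneously. Lemma~\ref{thm:qz} then yields a single finite set $F\subset\ZZ\setminus\{(0,0)\}$, depending only on $(n,k)$, such that every $\eta^g_x$ has a global period vector in $F$. A direct computation (set $j=0$ and vary $i$) shows that $\eta^g_x$ has period $(p,q)$ exactly when $\sigma^pg^qx=x$. Hence
\[
X=\bigcup_{(p,q)\in F}\mathrm{Fix}(\sigma^pg^q),
\]
a finite union of subshifts, for $g=g_1$ and again for $g=g_2$. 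On the piece $\mathrm{Fix}(\sigma^pg^q)$ with $q\neq 0$ the generator satisfies $g^{q}=\sigma^{-p}$, i.e.\ a nonzero power of $g$ coincides with a power of $\sigma$. The vectors with $q=0$ only contribute the sets $\mathrm{Fix}(\sigma^p)$, each of which is finite; collecting these over $g_1$ and $g_2$ gives a finite, $\langle g_1,g_2\rangle$-invariant set $\Pi$ outside of which both $g_1$ and $g_2$ are ``controlled'' in the above sense.

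Finally I would manufacture a global relation. Writing $X=\bigcup_\alpha A_\alpha=\bigcup_\beta B_\beta$ for the two decompositions, any point of $X\setminus\Pi$ lies in some $A_\alpha$ with nonzero $g_1$-exponent and some $B_\beta$ with nonzero $g_2$-exponent; on the ($\sigma$-invariant) intersection $A_\alpha\cap B_\beta$ suitable powers of $g_1$ and of $g_2$ are both powers of $\sigma$, hence commute there. Choosing $Q$ (resp.\ $S$) divisible by all relevant $g_1$-exponents (resp.\ $g_2$-exponents) makes every further power of $g_1^Q$ and $g_2^S$ commute on all of $X\setminus\Pi$. On the finite invariant set $\Pi$ the maps $g_1,g_2$ act as permutations, so if $M$ is the exponent of the (finite) symmetric group of $\Pi$, then $g_1^{QM}$ and $g_2^{SM}$ fix $\Pi$ pointwise, and in particular commute there. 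Combining the two regions yields the genuine global identity $g_1^{QM}g_2^{SM}=g_2^{SM}g_1^{QM}$ in $\Aut(X)$, a nontrivial relation between two distinct words, contradicting freeness.

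The main obstacle is exactly the passage from ``local'' to ``global.'' Because $X$ is not assumed minimal or even transitive, Quas--Zamboni only gives that $X$ is \emph{covered} by the proper subshifts $\mathrm{Fix}(\sigma^pg^q)$, with possibly different relations $g^q=\sigma^{-p}$ on different pieces, and no single piece need exhaust $X$. The device that makes the argument go through is to intersect the two covers and then kill the unavoidable finite periodic exceptional set $\Pi$ by passing to a uniform power $M$; checking that a single pair of exponents $(QM,SM)$ works simultaneously on every piece and on $\Pi$ is where the care lies. I note that the relation produced is of commuting type rather than a coincidence among the words $g_1^ig_2^j$: this is necessary, since an abelian copy of $\N^2$ can occur inside $\Aut(X)$, and it is precisely a \emph{nonabelian} free semigroup that is being excluded.
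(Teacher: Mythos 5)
Your proposal does not address the statement at hand. The statement is Lemma~\ref{thm:qz} itself: a purely combinatorial assertion about two-dimensional configurations, namely that any $\eta\in\A^{\Z^2}$ whose rectangular complexity satisfies $P_\eta(n,k)\leq nk/16$ is globally periodic, with period vector drawn from a finite set $F$ depending only on $(n,k)$. What you have written is instead a proof outline for Theorem~\ref{th:main2} (nonexistence of a rank $2$ free semigroup in $\Aut(X)$ when $\liminf P_X(n)/n^2=0$), and in the middle of it you invoke Lemma~\ref{thm:qz} as a black box (``Lemma~\ref{thm:qz} then yields a single finite set $F$\dots''). Relative to the task this is circular: you assume exactly the result you were asked to prove. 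Nothing in your argument --- the space-time coding $\eta^g_x$, the bound $P_{\eta^g_x}(n,k)\leq P_X(n+2kR)$, the decomposition of $X$ into fixed-point sets, the passage to uniform powers --- bears on why low rectangular complexity of an arbitrary $\Z^2$-configuration forces periodicity.

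A genuine proof of Lemma~\ref{thm:qz} requires different tools. The periodicity assertion (without the uniform finite set $F$) is the main theorem of Quas--Zamboni~\cite{QZ}, proved by a combinatorial analysis of low-complexity $\Z^2$ colorings in the spirit of the Nivat conjecture; the uniformity of $F$ over all $\eta$ with $P_\eta(n,k)\leq nk/16$ then follows either by tracking the explicit bound on the period vector through that proof or by a compactness argument over the closed set of such configurations. Note also that the paper itself offers no proof of this lemma: it is quoted from~\cite{QZ}, in the rephrased form given in~\cite{CK3}. For what it is worth, had the statement under review been Theorem~\ref{th:main2}, your outline would be broadly parallel to the paper's actual proof of that theorem (space-time configurations, Quas--Zamboni, separate treatment of the non-aperiodic part), differing mainly in how the exceptional periodic points are killed; but that is not the statement you were asked to prove.
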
 
We now adapt the technique developed in~\cite{CK3} to prove Theorem~\ref{th:main2}. 

\begin{proof}[Proof of Theorem~\ref{th:main2}] 
Let $X\subset\A^{\Z}$ be a subshift and suppose $P_X(n)=o(n^2)$.  For each $\varphi\in\Aut(X)$ and each $x\in X$, define $\eta_{\varphi,x}\in\A^{\Z^2}$ by setting $\eta_{\varphi,x}(i,j):=(\varphi^j\sigma^i)(x)$ (this is the space time of the system).  Finally let 
$$
Y_{\varphi,x}\subset\A^{\Z^2}:=\overline{\{\eta_{\varphi,x}\circ S^jT^i\colon(i,j)\in\Z^2\}} 
$$
where $S,T\colon\Z^2\to\Z^2$ are the vertical and horizontal shifts, respectively: $S(i,j):=(i,j+1)$ and $T(i,j):=(i+1,j)$.  Since $P_X(n)=o(n^2)$, it follow from~\cite[Lemma 2.1]{CK3} that $P_{Y_{\varphi,x}}(n,n)=o(n^2)$ (however the rate at which $P_X(n)/n^2$ tends to zero depends on the range of $\varphi$). 

For contradiction, suppose $\varphi,\psi\in\Aut(X)$ generate a free semigroup.  Let $Z\subset X$ be the closure of the aperiodic elements of $X$ and as already noted, $Z$ is $\Aut(X)$-invariant.  By Theorem~\ref{thm:qz}, there exists a finite set $F\subset\Z^2\setminus\{(0,0)\}$ such that for any $x\in Z$ the maps $\eta_{\varphi,x}$ and $\eta_{\psi,x}$ are both periodic with some period vector in $F$.  Note that if $x$ is aperiodic, then $\eta_{\varphi,x}$ and $\eta_{\psi,x}$ cannot be horizontally periodic.  
Therefore there exists $M\in\N$ such that $\eta_{\varphi,x}$ and $\eta_{\psi,x}$ both have period vectors with $y$-coordinate $M$, 
for all aperiodic $x\in Z$.  It follows that $\varphi^Mx$ and $\psi^Mx$ are both shifts of $x$.  As this holds for all aperiodic $x\in X$, 
the restrictions of $\varphi^M$ and $\psi^M$ to $Z$ are both elements of the abelian group $\Aut(Z)\cap[\sigma]$.  
In particular, the restriction of the commutator $[\varphi^{M},\psi^M]$
 to $Z$ is the identity.  
 
 Suppose the range of $[\varphi^{M},\psi^M]$ is $R$.  
 If $w\in\mathcal{L}_R(X)$ is such that $[w]_0^+$ contains an aperiodic point, then $[\varphi^{M},\psi^M]$ 
 acts like the identity map (when thought of as a range $R$ block code) on $w$.  It follows that if $[\varphi^{M},\psi^M]$
  does not act like the identity map on $[w]_0^+$, then $[w]_0^+\cap X$ 
  does not contain any aperiodic points.  Furthermore, this means  that   $[w]_0^+\cap X$ 
  can not contain periodic points of arbitrarily large period.  
Therefore, $X\setminus Z$ is finite and so there exists $k\in\N$ such that 
  $[\varphi^{kM},\psi^{kM}]$ is the identity on $X$; contradicting the fact that $\varphi^{kM}$ 
  and $\psi^{kM}$ do not commute (since $\varphi$ and $\psi$ generate a free semigroup). 
\end{proof} 

\noindent While Theorem~\ref{th:main2} applies to a larger class of shifts than Theorem~\ref{th:main}, it does not conclude that $\Aut(X)$ amenable.  Nevertheless, Theorem~\ref{th:main2} does give algebraic information about $\Aut(X)$ (in particualr it cannot contain a non-abelian free subgroup) and a recent result of Salo and Schraudner shows that it is essentially optimal: 

\begin{theorem}[Salo-Schraudner~\cite{SaSc}] 
There exists a subshift $(X,\sigma)$ such that 
$$ 
P_X(n)=(n+1)^2 
$$ 
and is such that $\Aut(X)$ contains a free semigroup of rank $2$. 
\end{theorem}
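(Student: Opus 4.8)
This is an existence statement, so in contrast to the non-existence results above the entire content lies in producing a witness. The plan is to exhibit an explicit subshift $X\subset\A^\Z$ over a small finite alphabet together with two maps $\varphi,\psi$, and then to carry out three independent verifications: first, that $X$ is a genuine subshift and that $\varphi,\psi\in\Aut(X)$, i.e. each is a reversible sliding block code commuting with $\sigma$ and carrying $X$ onto $X$; second, that $P_X(n)=(n+1)^2$ exactly; and third, that the assignment of a map to each nonempty positive word in $\varphi,\psi$ is injective, so that $\langle\varphi,\psi\rangle^+$ is free of rank $2$. I would take the operative meaning of ``free of rank $2$'' to be that \emph{all} distinct positive words give distinct maps (so in particular $\varphi\psi\neq\psi\varphi$), since this is the property whose negation drives the contradiction in the proof of Theorem~\ref{th:main2}.

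The apparent paradox to be dissolved at the outset is that quadratic complexity forces zero entropy and hence very few admissible words, whereas a free semigroup has $2^\ell$ distinct words of length $\ell$. The resolution I would build on is that freeness does \emph{not} require any single configuration to encode an entire word: to separate two distinct positive words $W_1\neq W_2$ it suffices to produce \emph{one} point $x\in X$ with $W_1(x)\neq W_2(x)$, and the witnesses for different pairs of words may be drawn from entirely different, individually low-complexity, configurations. Thus a free action is compatible with quadratic complexity provided $X$ is assembled so as to contain, for every pair of distinct words, a separating orbit, while the global count of admissible length-$n$ windows is kept quadratic.

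Concretely I would realize a faithful action of the free monoid $\{\varphi,\psi\}^*$ on $X$ by bounded-range, reversible ``marker'' automorphisms: configurations carry a sparse, rigid, aperiodic scaffold (Sturmian-like, contributing a single $n+1$ factor and no entropy) together with a bounded mutable region near a marker, and $\varphi,\psi$ rearrange that region in two non-commuting ways while fixing the scaffold and everything far from the marker. This makes them block codes with block-code inverses, hence automorphisms. Verification (ii) I would do by a direct window count: each admissible length-$n$ window is pinned down by a scaffold phase (an $(n+1)$-fold Sturmian choice) together with the placement and content of at most one marker relative to that phase (the complementary linear factor), and one prunes the double counting so that the product is exactly $(n+1)^2$ rather than merely $O(n^2)$. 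Verification (iii) is then the statement that the marker dynamics realize the free monoid faithfully, which I would check by tracking, in the spacetime array $\eta_{\varphi,x}(i,j):=(\varphi^j\sigma^i)(x)$ and its analogues for mixed words, how distinct words produce distinguishable marker trajectories on a suitable family of test points.

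The main obstacle is precisely the simultaneous fulfilment of (ii) and (iii): one must design a single subshift that contains enough separating witnesses to make the action faithful, yet encodes them with only \emph{linear} redundancy so that closing up under $\sigma$ and under the orbits of the witnessing points does not inflate $P_X(n)$ past quadratic. This is why the mutable data must be attached to the aperiodic scaffold in a spread-out, zero-entropy fashion: distinct global images differ only in bounded windows already accounted for by the linear marker factor, so no new high-entropy words are created. Pinning the complexity to the exact polynomial $(n+1)^2$, constant included, rather than to its order of growth, is the final delicate bookkeeping step, and it is here that the explicit combinatorial design of Salo and Schraudner carries the argument.
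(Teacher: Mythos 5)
There is a genuine gap: for an existence theorem the entire content of the proof is the witness, and you never produce one. Everything after ``Concretely I would realize'' is a list of desiderata (a scaffold, a marker, two non-commuting rearrangements); the three verifications you enumerate are named but not carried out, and your closing sentence concedes that the decisive step ``is the explicit combinatorial design of Salo and Schraudner.'' A blind proof that defers the construction to the very result being proved has not proved anything. The deferral is not harmless bookkeeping here, because the statement sits right at the boundary of Theorem~\ref{th:main2}: the complexity must come out exactly quadratic (so that $\liminf P_X(n)/n^2>0$), and ``pruning the double counting'' is precisely where an imprecise design can silently fail.

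Beyond the missing witness, the design you sketch is structurally inadequate. You propose generators that ``rearrange that region in two non-commuting ways while fixing the scaffold and everything far from the marker,'' with the mutable region bounded. If such a generator fixes the marker's placement and only permutes the contents of a bounded window anchored at it, then (being a sliding block code) it is determined by a permutation of the finitely many possible window contents, so any finitely many such maps generate a \emph{finite} group, which cannot contain a free semigroup of rank $2$. If instead the generators translate the marker, you escape finiteness, but a single walker on a rigid Sturmian background has nowhere to store the unbounded, order-sensitive memory needed to separate the $2^{\ell}$ positive words of length $\ell$: the scaffold cannot be locally edited (any automorphism must preserve it up to a global shift), so the only record of the word applied is the walker's net displacement, which cannot distinguish $\varphi\psi$ from $\psi\varphi$ without further structure that you never supply. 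Both actual constructions solve exactly this problem with \emph{two} independent sparse objects at unbounded relative distance: Salo and Schraudner~\cite{SaSc} take the Cartesian product of two copies of the subshift on $\{0,1\}$ in which each point has at most one occurrence of $1$ (this gives $P_X(n)=(n+1)^2$ on the nose), while the paper's own example places a person $p$ and a nail (in state $1$, $a$, or $b$) in a sea of $0$'s, with range-$1$ generators $\varphi_a,\varphi_b$ that move the person one step right and hang or remove pictures upon reaching the nail. Freeness is then checked concretely on test points $x_i$ (person $i$ steps to the left of the nail): for $g=g_1\cdots g_k\in\langle\varphi_a,\varphi_b\rangle^+$, the length $k$ is recovered as the unique $i$ for which $gx_i$ has the person at the origin, and the $i$-th letter of $g$ is read off from $gx_{k-i+1}$, whose nail records the letter of the generator being applied when the person crossed it. Your proposal contains no analogue of this word-recovery mechanism, and that mechanism is the heart of the freeness claim.
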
 
The example that they construct is the Cartesian product of two copies of the subshift $X$ on the alphabet $\{0,1\}$ where each $x\in X$ contains at most one occurrence of $1$. 

We present here a second example (different from that of~\cite{SaSc}) of a shift of quadratic growth whose automorphism group contains a free semigroup of rank $2$. 

Let $\mathcal{A}$ be the eight letter alphabet $\{0,1,a,b,p,1p,ap,bp\}$.  We consider the subshift $X\subset\A^{\Z}$ consisting of the following: 
	\begin{enumerate} 
	\item the coloring of all $0$'s; 
	\item any coloring which is all $0$'s except at a single location where it is one of $1$, $a$, $b$, $p$, $1p$, $ap$, or $bp$; 
	\item any coloring which is all $0$'s except at two locations, one of which is $p$ and the other of which is one of $1$, $a$, or 
	$b$. 
	\end{enumerate} 
We leave it to the reader to check that this does indeed form a (closed) subshift and that its complexity function grows quadratically.  
To show that this contains a free semigroup, we 
define two automorphisms of $X$ which we call $\varphi_a$ and $\varphi_b$.  These are 
range $1$ block codes and we claim they generate a free semi-group of rank $2$.  Rather than define them on each block, we find the following description helpful.  A person, $p$, walks down a bi-infinite hallway.  At one location in the hallway there is a nail on the wall and there are two possible pictures, $a$ and $b$, than can be hung on the nail.  When the nail is unoccupied, its location is denoted $1$.  When it holds picture $a$, its location is $a$.  When it holds picture $b$, its location is $b$.  If the person is standing in front of the nail, the person/nail is denoted $1p$, $ap$, or $bp$ (depending on the state of the nail).  Now we can define our automorphisms.  When $\varphi_a$ is applied to an element of $X$ it moves the person one space to the right.  If this causes the person to be standing in front of the nail they take one of three actions: 
	\begin{itemize} 
	\item if the nail is unoccupied, the person hangs picture $a$ on it; 
	\item if the nail holds picture $a$, the person removes it and leaves the nail unoccupied; 
	\item if the nail holds picture $b$, the person leaves the picture undisturbed. 
	\end{itemize} 
We claim that these rules can be implemented by a range $1$ block code and that $\varphi_a$ is invertible.  Similarly when $\varphi_b$ is applied to an element of $X$ it moves the person one space to the right.  If this causes the person to be standing in front of the nail, the analogous rules (with the roles of $a$ and $b$ reversed) apply.  This is also invertible and can be implemented by a range $1$ block code.  Note that $\varphi_a$ and $\varphi_b$ carry elements of $X$ to elements of $X$.  Finally suppose $w=(w_1,w_2,\dots,w_k)\in\{a,b\}^k$ and let $g\in\langle\varphi_a,\varphi_b\rangle^+$ be 
$$ 
g=g_1g_2\cdots g_k 
$$ 
where $g_i\in\{\varphi_a,\varphi_b\}$ for each $1\leq i\leq k$ is the automorphism corresponding to letter $w_i$.  First we show how to find $k$ by observing the action of $g$ on $X$.  For each $i\in\N$, let $x_i\in X$ be the configuration which has a $1$ at the origin, a $p$ exactly $i$ spaces to the left of the origin, and $0$'s elsewhere.  Note that $gx_k$ places the person at the origin and $gx_i$ places the person off the origin for all $i\neq k$.  Consequently, the length of a minimal presentation of $g$ by $\varphi_a$ and $\varphi_b$ can be deduced from this information and all representations of $g$ as a product of $\varphi_a$ and $\varphi_b$ (but not their inverses) have the same length.  Now fix $1\leq i\leq k$.  Then $gx_{k-i+1}$ is a configuration with the letter representing $g_i$ ($a$ or $b$) at the origin.  Therefore the natural surjection from $\{a,b\}^*$ to $\langle\varphi_a,\varphi_b\rangle^+$ is an injection and so this is the free semi-group of rank $2$.

\end{document}